\numberwithin{equation}{section}
\numberwithin{figure}{section}
\theoremstyle{plain}
\newtheorem{thm}{\protect\theoremname}
  \theoremstyle{definition}
  \newtheorem{defn}[thm]{\protect\definitionname}
  \theoremstyle{remark}
  \newtheorem{rem}[thm]{\protect\remarkname}
  \theoremstyle{plain}
  \newtheorem{lem}[thm]{\protect\lemmaname}
  \theoremstyle{plain}
  \newtheorem{cor}[thm]{\protect\corollaryname}
  \theoremstyle{plain}
  \newtheorem{prop}[thm]{\protect\propositionname}
  \providecommand{\corollaryname}{Corollary}
  \providecommand{\definitionname}{Definition}
  \providecommand{\lemmaname}{Lemma}
  \providecommand{\propositionname}{Proposition}
  \providecommand{\remarkname}{Remark}
\providecommand{\theoremname}{Theorem}
\begin{document}

\title[Girsanov Theorem for Multifractional Brownian Processes]{Girsanov Theorem for Multifractional Brownian Processes }

\author{Fabian A. Harang, Torstein Nilssen, Frank N. Proske}

\address{Fabian A. Harang\\
Email: fabianah@math.uio.no\\
Department of Mathematics\\
University of Oslo\\
Postboks 1053\\
Blindern 0316\\
Oslo\\
 }

\address{Torstein Nilssen\\
Email: torsteka@math.uio.no\\
Department of Mathematics\\
University of Oslo\\
Postboks 1053\\
Blindern 0316\\
Oslo\\
Funded by Norwegian Research Council (Project 230448/F20)}

\address{Frank N. Proske\\
Email: proske@math.uio.no\\
Department of Mathematics\\
University of Oslo\\
Postboks 1053\\
Blindern 0316\\
Oslo}
\begin{abstract}
In this article we will present a new perspective on the variable
order fractional calculus, which allows for differentiation and integration
to a variable order, i.e. one differentiates (or integrates) a function
along the path of a ``regularity function''. The concept of multifractional
calculus has been a scarcely studied topic within the field of functional
analysis in the last 20 years. We develop a multifractional derivative
operator which acts as the inverse of the multifractional integral
operator. This is done by solving the Abel integral equation generalized
to a multifractional order. With this new multifractional derivative
operator, we are able to analyze a variety of new problems, both in
the field of stochastic analysis and in fractional and functional
analysis, ranging from regularization properties of noise to solutions
to multifractional differential equations. In this paper, we will
focus on application of the derivative operator to the construction
of strong solutions to stochastic differential equations where the
drift coefficient is merely of linear growth, and the driving noise
is given by a non-stationary multifractional Brownian motion with
a Hurst parameter as a function of time. The Hurst functions we study
will take values in a bounded subset of $(0,\frac{1}{2})$. The application
of multifractional calculus to SDE's is based on a generalization
of the works of D. Nualart and Y. Ouknine in \cite{NuaOki} from 2002.
\end{abstract}

\keywords{Multifractional Calculus, Fractional calculus of variable order,
Multifractional Brownian motion, Regularization, Stochastic differential
equations. }

\maketitle
\tableofcontents{}

\section{Introduction }

Fractional calculus is a well studied subject in the field of functional
analysis, and has been adopted in the field of stochastic analysis
as a tool for analysing stochastic processes which exhibit some memory
of its own past, called fractional Brownian motion (fBm), denoted
as $\tilde{B}_{\cdot}^{H}:[0,T]\rightarrow\mathbb{R}$ for some $H\in(0,1)$.
The process is characterized by its co-variance function 
\[
R_{H}(s,t)=E[\tilde{B}_{s}^{H}\tilde{B}_{t}^{H}]=\frac{1}{2}\left(t^{2H}+s^{2H}-|t-s|^{2H}\right),
\]
and the process is often represented as a type of fractional integral
with respect to a Brownian motion, in the sense that 
\[
\tilde{B}_{t}^{H}=\int_{0}^{t}K_{H}(t,s)dB_{s},
\]
where $K_{H}$ is a singular and square integrable Volterra kernel
and $B_{\cdot}$ is a Brownian motion. This type of process has been
studied in connection with various physical phenomena, such as weather
and geology, but also in modelling of internet traffic and finance.

In the 1990's, a generalization of the fractional Brownian motion
was proposed by letting the Hurst parameter $H$ of the process $B_{\cdot}^{H}$
to be dependent on time (e.g. \cite{Peltier}). The process was named
multifractional Brownian motion (or mBm for short), referring to the
fact that the fractional parameter $H$ was a function depending on
time taking values between $0$ and $1$. There has later been a series
of articles on this type of processes, relating to local time, local
and global H\"{o}lder continuity of the process, and other applications
( see \cite{LebVehHer,LebovitsLevy,Corlay,BouDozMar,Bianchi} to name
a few). However, there has not been, to the best of our knowledge,
any articles on solving stochastic differential equations driven by
this type of process. The mBm is a non-stationary stochastic process
which makes the it more intricate to handle in differential equations,
and require sufficient tools from fractional analysis in the sense
of multifractional calculus (or fractional calculus of variable order).
This type of fractional calculus was originally proposed by S. Samko
(\cite{Samko1} ) and others in the beginning of the 1990's, and generalizes
the classical Riemann Liouville fractional integral by considering
\[
I_{0+}^{\alpha}f(x)=\frac{1}{\Gamma(\alpha(x))}\int_{0}^{x}(x-t)^{\alpha(x)-1}f(t)dt,
\]
where $f\in L^{1}\left(\mathbb{R}\right)$ and $\alpha:\mathbb{R}\rightarrow[c,d]\subset(0,1)$.
In the same way, the authors also proposed to generalize the fractional
derivative i.e. 
\[
D_{0+}^{\alpha}f(x)=\frac{1}{\Gamma\left(1-\alpha(x)\right)}\frac{d}{dx}\int_{0}^{x}\frac{f(t)}{(x-t)^{\alpha(x)}}dt.
\]
 However, by generalizing the fractional derivative in this way, the
authors found that the derivative is no longer the inverse of the
integral operator, but one rather finds that 
\[
D_{0+}^{\alpha}I_{0+}^{\alpha}=I+K,
\]
where $I$ is the identity, and under some conditions $K$ is a compact
operator. Some of the research then focused on understanding the properties
of the operator $K$, but fractional calculus of variable order has
been scarcely treated in the literature, so far. 

It is well known in stochastic analysis that one can construct weak
solutions to SDE's by Girsanov's theorem, see for example \cite{NuaOki}
in the case when the SDE is driven by a fBm. To be able to develop
a Girsanov's theorem to SDE's driven by multifractional noise, we
need to have an inverse operator relating to the multifractional integral. 

We will in this paper construct the multifractional derivative of
a function $f\in L^{p}\left([0,T]\right)$ as the inverse of the multifractional
integral, and see that this multifractional derivative operator is
well defined on a certain class of functions. We will then apply this
operator to the construction of a Girsanov theorem for Riemann-Liouville
multifractional Brownian motions, and finally we show existence of
strong solutions to certain SDE's. 

The SDE we will investigate is given by 
\begin{equation}
X_{t}^{x}=x+\int_{0}^{t}b\left(s,X_{s}^{x}\right)ds+B_{t}^{h};\,\,X_{0}^{x}=x\in\mathbb{R},\label{eq:The SDE}
\end{equation}
where $B^{h}$ is a Riemann Liouville multifractional Brownian motion
with $regularity$ $function$ $h:[0,T]\rightarrow[a,b]\subset(0,\frac{1}{2})$.
Actually, it turns out that we only need $t\mapsto\int_{0}^{t}b\left(s,X_{s}^{x}\right)ds$
to be locally $\beta$ - H\" {o}lder continuous for some well chosen $\beta$
to construct weak solutions, but we will need $b$ to be of linear
growth to get strong solutions by the comparison theorem. 

\subsection{Notation and preliminaries}

We will make use of a space of H\" {o}lder continuous functions defined
as functions $f:[0,T]\rightarrow V$ for some Banach space $V$ (we
will mostly use $V=\mathbb{R}^{d}$ in this article), which is such
that the H\" {o}lder norm $\parallel f\parallel_{\beta}$ defined by 
\[
\parallel f\parallel_{\beta}=\left|f(0)\right|+\sup_{s\neq t\in[0,T]}\frac{\left|f(t)-f(s)\right|}{\left|t-s\right|^{\beta}}<\infty.
\]
We denote this space by $\mathcal{C}^{\beta}\left([0,T];V\right)$.
In addition, for $a\in[0,T]$ let us define $\mathcal{C}_{a}^{\beta}\left([0,T];V\right)$
to be the subspace of $\mathcal{C}^{\beta}\left([0,T];V\right)$ such
that for $f\in\mathcal{C}_{a}^{\beta}\left([0,T];V\right)$, $f(a)=0$.
In particular, we will be interested in $\mathcal{C}_{0}^{\beta}\left([0,T];V\right)$.
We use the standard notation for $L^{p}\left(V;\mu\right)$ spaces,
where $V$ is a Banach space and $\mu$ is a measure on the Borel
sets of $V$ (usually what measure we use is clear from the situation)i.e.,
\[
\parallel f\parallel_{L^{p}\left(V\right)}:=\left(\int_{V}|f|^{p}d\mu\right)^{\frac{1}{p}}.
\]
 Furthermore let $\triangle^{(m)}\left[a,b\right]$ denote the $m$-simplex.
That is, define $\triangle^{(m)}[a,b]$ to be given by 
\[
\triangle^{(m)}[a,b]=\left\{ (s_{1},..,s_{m})|a\leq s_{1}<...<s_{m}\leq b\right\} .
\]
\\
We will use the notion of variable order exponent spaces, which has
in recent years become increasingly popular in potential analysis,
see for example \cite{SamRafMesKok,DieHarHasRuz} for two new books
on the subject. We will in this paper be particularly concerned with
the variable exponent H\" {o}lder space, as we are looking to differentiate
functions to a variable order. We will therefore follow the definitions
of such a space introduced in \cite{SamRos}, and give some preliminary
properties. 
\begin{defn}
Let $\alpha$ be a $C^{1}$ regularity function with values in $[a,b]\subset(0,1)$,
as described above. We define the space of locally H\" {o}lder continuous
functions $f:[0,T]\rightarrow\mathbb{R}$ by the norm 
\[
\parallel f\parallel_{\alpha\left(\cdot\right);[0,T]}:=|f(0)|+\sup_{x,y\in[0,T]}\frac{|f(x)-f(y)|}{|x-y|^{\max\left(\alpha(x),\alpha(y)\right)}}<\infty
\]
We denote this space by $\mathcal{C}^{\alpha\left(\cdot\right)}\left([0,T];\mathbb{R}\right)$.
Moreover denote by $\mathcal{C}_{0}^{\alpha(\cdot)}\left([0,T];\mathbb{R}\right)$
the space of locally H\" {o}lder continuous functions which start in $0,$
i.e. for $f\in\mathcal{C}_{0}^{\alpha(\cdot)}\left([0,T];\mathbb{R}\right)$
then $f(0)=0$.
\end{defn}
\begin{rem}
Notice that in $\mathcal{C}_{0}^{\alpha(\cdot)}\left([0,T];\mathbb{R}\right)$
, all functions $f$ satisfy $|f(x)|\leq Cx^{\alpha(x)}.$ Indeed,
just write 
\[
|f(x)|\leq|f(x)-f(0)|+f(0)\leq\parallel f\parallel_{\alpha\left(\cdot\right);[0,T]}x^{\alpha(x)},
\]
where we have used that $\max(\alpha\left(x\right),\alpha\left(y\right))\geq\alpha\left(x\right).$
Further, we have the following equivalence 
\[
\sup_{|h|\leq1;x+h\in[0,T]}\frac{|f(x+h)-f(x)|}{|h|^{\alpha(x)}}\simeq\sup_{x,y\in[0,T]}\frac{|f(x)-f(y)|}{|x-y|^{\max\left(\alpha(x),\alpha(y)\right)}}.
\]
\end{rem}
To show what we mean by the local regular property, we may divide
the interval $[0,T]$ into $n\geq2$ intervals, by defining $T_{0}=0$,
$T_{1}=\frac{T}{n}$ and $T_{k}=\frac{kT}{n}$, then 
\[
[0,T]=\bigcup_{k=0}^{n-1}[T_{k},T_{k+1}].
\]
Furthermore, we may define two sequences $\{\beta_{k}\}$ and $\{\hat{\beta}_{k}\}$
of $\sup$ and $\inf$ values of $\alpha$ restricted to each interval
$[T_{k},T_{k+1}]$, i.e 
\[
\begin{array}{cc}
\beta_{k}= & \inf_{t\in[T_{k},T_{k+1}]}\alpha(t)\\
\hat{\beta}_{k}= & \sup_{t\in[T_{k},T_{k+1}]}\alpha(t)
\end{array}
\]
 Then we have the following inclusions 
\[
\mathcal{C}^{\beta_{k}}\left(\left[T_{k},T_{k+1}\right]\right)\supset\mathcal{C}^{\alpha(\cdot)}\left(\left[T_{k},T_{k+1}\right]\right)\supset\mathcal{C}^{\hat{\beta}_{k}}\left(\left[T_{k},T_{k+1}\right]\right).
\]
By letting $n$ be a large number, we have by the continuity of $\alpha$
that $\hat{\beta}_{k}-\beta_{k}=\varepsilon$ gets small. Therefore
the space $\mathcal{C}^{\alpha(\cdot)}\left(\left[T_{k},T_{k+1}\right]\right)$
is similar to the spaces $\mathcal{C}^{\beta_{k}}\left(\left[T_{k},T_{k+1}\right]\right)$
and $\mathcal{C}^{\hat{\beta}_{k}}\left(\left[T_{k},T_{k+1}\right]\right)$
when the $T_{k+1}-T_{k}$ is small, but on large intervals the spaces
may be different depending on the chosen $\alpha$. 

We will often write $f_{*}:=\inf_{t\in[0,T]}f(t)$ and $f^{*}:=\sup_{t\in[0,T]}f(t)$. 

\section{multifractional Calculus \label{sec:Multifractional-Calculus}}

In this section we will give meaning to the multifractional calculus.
Many references in fractional analysis refer to this concept as fractional
calculus of variable order, as the idea is to let the order of integration
(or differentiation) be dependent on time (or possibly space, but
for our application, time will be sufficient), see for example \cite{Samko1,Samko2}
and the references therein. We will use the word multifractional calculus
for the concept of fractional calculus of variable order, as this
is coherent with the notion of multifractional stochastic processes.
\\

\begin{defn}
\label{Multifractional Rieman Liouville Integrals) }($Multifractional$
$Riemann$-$Liouville$ $integrals$) For $0<c<d$, assume $f\in L^{1}\left([c,d]\right),$
and $\alpha:[c,d]\rightarrow[a,b]\subset(0,\infty)$ is a differentiable
function. We define the left multifractional Riemann Liouville integral
operator $I_{c+}^{\alpha}$ by 
\[
\left(I_{c+}^{\alpha}f\right)(x)=\frac{1}{\Gamma\left(\alpha(x)\right)}\int_{c}^{x}(x-y)^{\alpha(x)-1}f(y)dy.
\]
 And define the space $I_{c+}^{\alpha}L^{p}\left([0,T]\right)$ as
the image of $L^{p}\left([0,T]\right)$ under the operator $I_{c+}^{\alpha}.$ 
\end{defn}
By the definition of the space $I_{c+}^{\alpha}L^{p}\left([0,T]\right),$
we have that for all $g\in I_{c+}^{\alpha}L^{p}\left([0,T]\right)$,
$g(c)=0.$ Indeed, since $g=I_{c+}^{\alpha}f,$ we must have $I_{c+}^{\alpha}f(c)=0$
regardless of the function $\alpha$. This property will become very
important later, and we will give a more thorough discussion of the
properties of the multifractional integral and derivative at the end
of this section. \\

We want to define the multifractional derivative of a function $g\in I_{c+}^{\alpha}L^{p}\left([0,T]\right)$
as the inverse operation of $I_{c+}^{\alpha}$, such that if $g\in I_{c+}^{\alpha}L^{p}\left([0,T]\right)$
then there exists a unique $f\in L^{p}$ which satisfies $g=I_{c+}^{\alpha}f$
and we define the fractional derivative $D_{c+}^{\alpha}g=f$. In
contrast to this methodology used in \cite{Samko1}, we believe that
to be able to construct a coherent fractional calculus, one must choose
to generalize either the derivative or the integral operator, and
the other operator must be found through the definition of the first.
Therefore, we will use a method similar to that solving Abel's integral
equation, often used to motivate the definition of the fractional
derivative in the case of constant regularity function. However, by
generalizing the integral equation, the calculations become a little
more complicated.\\
 \\
We will start to give formal motivation for how we obtain the derivative
operator corresponding to the multifractional integral. First, for
an element $g\in I_{a+}^{\alpha}L^{p}\left([0,T]\right)$, we know
there exists a $f\in L^{p}\left([0,T]\right)$ such that 
\[
g(t)=\frac{1}{\Gamma\left(\alpha(t)\right)}\int_{a}^{t}\frac{f(s)}{\left(t-s\right)^{1-\alpha(t)}}ds.
\]
By some simple manipulation of the equation above, and using Fubini's
theorem, we obtain the equation, 
\[
\int_{a}^{x}\frac{\Gamma\left(\alpha(t)\right)g(t)}{\left(x-t\right)^{\alpha(t)}}dt=\int_{a}^{x}f(s)\int_{0}^{1}\tau^{\alpha(s+\tau(x-s))-1}\left(1-\tau\right)^{-\alpha(s+\tau(x-s))}d\tau ds.
\]
If $\alpha$ is constant, the last integral is simply the Beta function.
However, the fact that $\alpha$ is a function complicates the expression.
In the end, we are interested in solving the above equation by obtaining
$f(x)$, therefore, we may try to differentiate w.r.t $x$ on both
sides of the equation. We find that 
\[
\frac{d}{dx}\left(\int_{a}^{x}\frac{\Gamma\left(\alpha(t)\right)g(t)}{\left(x-t\right)^{\alpha(t)}}dt\right)
\]
\begin{equation}
=f(x)B(\alpha(x),1-\alpha(x))+\int_{a}^{x}f(s)\int_{0}^{1}\frac{d}{dx}\left(\tau^{\alpha(s+\tau(x-s))-1}\left(1-\tau\right)^{-\alpha(s+\tau(x-s))}\right)d\tau ds,\label{The ODE}
\end{equation}
and re-ordering the terms, we get 
\[
B(\alpha(x),1-\alpha(x))f(x)=\frac{d}{dx}\left(\int_{a}^{x}\frac{\Gamma\left(\alpha(t)\right)g(t)}{\left(x-t\right)^{\alpha(t)}}dt\right)
\]
\[
-\int_{a}^{x}f(s)\int_{0}^{1}\frac{d}{dx}\left(\tau^{\alpha(s+\tau(x-s))-1}\left(1-\tau\right)^{-\alpha(s+\tau(x-s))}\right)d\tau ds.
\]
 In this sense, the multifractional derivative $D_{a+}^{\alpha}$
of a function $g\in I_{a+}^{\alpha}L^{p}\left([0,T]\right)$ is given
by $f$ which is a solution to the ordinary differential equation
(ODE) above. Writing the above equation more compactly, we have 
\begin{equation}
f(x)=G_{a}(g)(x)+\int_{a}^{x}f(s)F(s,x)ds.\label{eq:Multifractional ODE}
\end{equation}

We will use the rest of this section to prove that the ODE that we
obtained above, actually is well defined, and that the multifractional
derivative can be defined as the solution to that ODE. In the above
motivation we looked at the multifractional derivative as the inverse
operator to the multifractional integral with initial value of integration
at a point $a$. However, we will mostly be interested in looking
at the inverse operator of the integral starting at $a=0$, and therefore
the construction of the multifractional will be focused on this particular
case, although it is straight forward to generalize this operator
to any $a\in[0,T]$. We will give a suitable definition of the multifractional
derivative in the following steps: 
\begin{enumerate}
\item The function $F:\triangle^{(2)}\left([0,T]\right)\rightarrow\mathbb{R}$
in equation (\ref{eq:Multifractional ODE}) is well defined and bounded
on $\triangle^{(2)}\left([0,T]\right)$ as long as $\alpha$ is $C^{1}$. 
\item We show that if $H\in L^{p}\left([0,T]\right),$ then the ODE 
\[
f(x)=H(x)+\int_{0}^{x}f(s)F(s,x)ds,
\]
has a unique solution $f$ in $L^{p}\left([0,T]\right).$
\item The functional $G_{0}$ defined by 
\[
G_{0}\left(g\right)(x)=\frac{1}{B(\alpha(x),1-\alpha(x))}\frac{d}{dx}\left(\int_{0}^{x}\frac{\Gamma\left(\alpha(t)\right)g(t)}{\left(x-t\right)^{\alpha(t)}}dt\right),
\]
 is such that the mapping $x\mapsto G_{0}\left(g\right)(x)\in L^{p}\left([0,T]\right)$
for all functions $g\in\mathcal{C}^{\alpha\left(\cdot\right)+\epsilon}\left([0,T]\right)$,
where $p>1$ and some small $\epsilon>0$ depending on $\alpha$. 
\item Then the ODE as motivated above has a unique solution in $L^{p}\left([0,T]\right)$
for all $g\in\mathcal{C}^{\alpha\left(\cdot\right)+\epsilon}\left([0,T]\right)$,
and we will define the multifractional derivative $D_{0+}^{\alpha}g=f$
as the solution to the equation (\ref{eq:Multifractional ODE}). \\
\\
\end{enumerate}
First, we will look at the function $F:\triangle^{(2)}\left([0,T]\right)\rightarrow\mathbb{R}$
in equation (\ref{eq:Multifractional ODE}). Notice that the derivative
of $\tau^{\alpha(s+\tau(x-s))-1}\left(1-\tau\right)^{-\alpha(s+\tau(x-s))}$
is explicitly given by 
\[
\frac{d}{dx}\left(\tau^{\alpha(s+\tau(x-s))-1}\left(1-\tau\right)^{-\alpha(s+\tau(x-s))}\right)
\]
\[
=\ln(\frac{\tau}{1-\tau})\frac{\tau^{\alpha(s+\tau(x-s))}}{\left(1-\tau\right)^{\alpha(s+\tau(x-s))}}\alpha'(s+\tau(x-s)),
\]
 and we arrive at a lemma which gives estimates on this derivative. 
\begin{lem}
\label{Let existence of beta derivative integral }Let $\alpha\in C^{1}\left(\left[0,T\right];(0,1)\right)$.
Define 
\[
U(s,x;\tau):=\alpha'(s+\tau(x-s))\times\left(\ln(\tau)-\ln\left(1-\tau\right)\right)\left(\frac{\tau}{1-\tau}\right)^{\alpha(s+\tau(x-s))},\,\,\,and
\]
\[
F(s,x)=\int_{0}^{1}U(s,x;\tau)d\tau.
\]
Then 
\[
\parallel F\parallel_{\infty;\triangle^{(2)}\left([0,T]\right)}:=\sup_{(s,x)\in\triangle^{(2)}([0,T])}\left|\int_{0}^{1}U(s,x;\tau)d\tau\right|\leq C(\alpha)\parallel\alpha\parallel_{C^{1}\left([0,T]\right)}<\infty,
\]
where $\parallel\alpha\parallel_{C^{1}\left([0,T]\right)}:=\sup_{t\in[0,T]}|\alpha(t)|+\sup_{t\in[0,T]}|\alpha'(t)|.$
\end{lem}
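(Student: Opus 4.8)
The plan is to use the prefactor $\alpha'(s+\tau(x-s))$ to extract $\sup_{t\in[0,T]}|\alpha'(t)|\le\|\alpha\|_{C^1([0,T])}$, and then to bound the remaining $\tau$-integral uniformly over $\triangle^{(2)}([0,T])$ by splitting it at $\tau=\tfrac12$ and controlling the variable exponent through the monotonicity of $t\mapsto t^{\beta}$ in $\beta$.

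First I would fix $(s,x)\in\triangle^{(2)}([0,T])$ and set $\xi(\tau):=s+\tau(x-s)$, which is a convex combination of $s$ and $x$ and hence lies in $[s,x]\subseteq[0,T]$, so $\alpha(\xi(\tau))\in[\alpha_*,\alpha^*]$. Pulling out the derivative term gives
\[
|F(s,x)|\le\|\alpha\|_{C^1([0,T])}\int_0^1\bigl|\ln\tau-\ln(1-\tau)\bigr|\Bigl(\tfrac{\tau}{1-\tau}\Bigr)^{\alpha(\xi(\tau))}\,d\tau ,
\]
so it suffices to bound the last integral by a constant depending only on $\alpha_*$ and $\alpha^*$; note that $0<\alpha_*\le\alpha^*<1$ since $\alpha$ is continuous on a compact interval with values in $(0,1)$.

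On $[0,\tfrac12]$ the base $\tfrac{\tau}{1-\tau}$ lies in $(0,1]$, and since $t\mapsto t^{\beta}$ is nonincreasing in $\beta$ for $t\in(0,1]$, we get $\bigl(\tfrac{\tau}{1-\tau}\bigr)^{\alpha(\xi(\tau))}\le\bigl(\tfrac{\tau}{1-\tau}\bigr)^{\alpha_*}\le 2^{\alpha_*}\tau^{\alpha_*}\le 2\tau^{\alpha_*}$, using $1-\tau\ge\tfrac12$. Combined with $|\ln\tau-\ln(1-\tau)|\le|\ln\tau|+\ln2$ on $[0,\tfrac12]$, the contribution of $[0,\tfrac12]$ is at most $2\int_0^{1/2}\bigl(|\ln\tau|+\ln2\bigr)\tau^{\alpha_*}\,d\tau$, which is finite because $\tau^{\alpha_*}|\ln\tau|$ is integrable near $0$ (as $\alpha_*>0$), and its value depends only on $\alpha_*$. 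For $[\tfrac12,1]$ I would substitute $\sigma=1-\tau$; by the reflection symmetry $\tau\leftrightarrow 1-\tau$ of the integrand the integral becomes $\int_0^{1/2}|\ln(1-\sigma)-\ln\sigma|\bigl(\tfrac{1-\sigma}{\sigma}\bigr)^{\alpha(x-\sigma(x-s))}\,d\sigma$, where now the base exceeds $1$, so monotonicity gives $\bigl(\tfrac{1-\sigma}{\sigma}\bigr)^{\alpha(\,\cdot\,)}\le\bigl(\tfrac{1-\sigma}{\sigma}\bigr)^{\alpha^*}\le\sigma^{-\alpha^*}$ (using $1-\sigma\le1$); hence this contribution is at most $\int_0^{1/2}\bigl(|\ln\sigma|+\ln2\bigr)\sigma^{-\alpha^*}\,d\sigma<\infty$ since $\alpha^*<1$, again depending only on $\alpha^*$. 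Adding the two halves and taking the supremum over $(s,x)$ yields $\|F\|_{\infty;\triangle^{(2)}([0,T])}\le C(\alpha)\|\alpha\|_{C^1([0,T])}$ with $C(\alpha)$ depending only on $\alpha_*$ and $\alpha^*$.

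The one point that needs care is the variable exponent $\alpha(\xi(\tau))$ sitting inside the $\tau$-integral: unlike the constant-order case, one cannot simply identify a Beta function. Splitting at the symmetry point $\tau=\tfrac12$ is precisely what allows the exponent to be replaced, on each half, by whichever of $\alpha_*,\alpha^*$ makes the power largest, after which everything reduces to the elementary integrability of $\tau^{\alpha_*}|\ln\tau|$ near $0$ and of $\sigma^{-\alpha^*}|\ln\sigma|$ near $\sigma=0$. I expect this monotonicity-plus-splitting step to be the main (and essentially only) obstacle; the boundedness of $|\ln\tau-\ln(1-\tau)|$ away from the endpoints and the membership $\xi(\tau)\in[0,T]$ are immediate.
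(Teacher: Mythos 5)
Your proof is correct and follows essentially the same route as the paper: extract $\sup|\alpha'|$ from the prefactor, split the remaining $\tau$-integral at $\tau=\tfrac12$, and replace the variable exponent by $\alpha_*$ or $\alpha^*$ according to whether the base $\tfrac{\tau}{1-\tau}$ is below or above $1$, reducing everything to the integrability of $\tau^{\alpha_*}|\ln\tau|$ and $\sigma^{-\alpha^*}|\ln\sigma|$ near $0$. The only cosmetic difference is that the paper splits the logarithm into two terms (four integrals $I_1,\dots,I_4$) instead of your reflection substitution $\sigma=1-\tau$, which changes nothing of substance.
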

\begin{proof}
Look at, 
\[
\left|\int_{0}^{1}\left(\ln(\tau)-\ln\left(1-\tau\right)\right)\left(\frac{\tau}{1-\tau}\right)^{\alpha(s+\tau(x-s))}\alpha'(s+\tau(x-s))d\tau\right|
\]
\[
\leq\parallel\alpha\parallel_{C^{1}\left([0,T]\right)}\int_{0}^{1}\left|\left(\ln(\tau)-\ln\left(1-\tau\right)\right)\left(\frac{\tau}{1-\tau}\right)^{\alpha(s+\tau(x-s))}\right|d\tau<\infty.
\]
The fact that the last integral is finite follows by simple calculations,
knowing that $\alpha(t)\in[c,d]\subset(0,1)$ for all $t\in[0,T].$
Indeed, we can write 
\[
\int_{0}^{1}|\ln(\tau)|\left(\frac{\tau}{1-\tau}\right)^{\alpha(s+\tau(x-s))}d\tau+\int_{0}^{1}|\ln(1-\tau)|\left(\frac{\tau}{1-\tau}\right)^{\alpha(s+\tau(x-s))}d\tau
\]
\[
=\int_{0}^{\frac{1}{2}}|\ln(\tau)|\left(\frac{\tau}{1-\tau}\right)^{\alpha(s+\tau(x-s))}d\tau+\int_{0}^{\frac{1}{2}}|\ln(1-\tau)|\left(\frac{\tau}{1-\tau}\right)^{\alpha(s+\tau(x-s))}d\tau
\]
\[
+\int_{\frac{1}{2}}^{1}|\ln(\tau)|\left(\frac{\tau}{1-\tau}\right)^{\alpha(s+\tau(x-s))}d\tau+\int_{\frac{1}{2}}^{1}|\ln(1-\tau)|\left(\frac{\tau}{1-\tau}\right)^{\alpha(s+\tau(x-s))}d\tau
\]
\[
=I_{1}+..+I_{4}
\]
 Now we have
\[
I_{1}\leq2^{\alpha^{*}}\int_{0}^{\frac{1}{2}}|\ln(\tau)|\tau^{\alpha_{*}}d\tau<\infty
\]
\[
I_{2}\leq2^{\alpha^{*}}|\ln\left(\frac{1}{2}\right)|\int_{0}^{\frac{1}{2}}\tau^{\alpha_{*}}d\tau<\infty
\]
\[
I_{3}\leq2^{\alpha^{*}}|\ln\left(\frac{1}{2}\right)|\int_{0}^{\frac{1}{2}}\left(1-\tau\right)^{-\alpha^{*}}d\tau<\infty
\]
\[
I_{4}\leq2^{1-\alpha_{*}}|\ln\left(\frac{1}{2}\right)|\int_{0}^{\frac{1}{2}}\left(1-\tau\right)^{-\alpha^{*}}d\tau<\infty,
\]
where $\alpha_{*}:=\inf_{t\in[0,T]}\alpha(t)$ and $\alpha^{*}:=\sup_{t\in[0,T]}\alpha(t)$. 
\end{proof}
We now show that the solution to the differential equation given by
\[
f(x)=H(x)+\int_{0}^{x}f(s)F(s,x)ds
\]
 is well-defined as an element of $L^{p}\left(\left[0,T\right]\right)$
when $H\in L^{p}\left([0,T]\right)$, and $F$ is given as above. 
\begin{lem}
Let $H\in L^{p}\left(\left[0,T\right]\right)$ and $F$ be given as
in Lemma \ref{Let existence of beta derivative integral }. Then there
exists a unique solution $f$ to the equation 
\[
f(x)=H(x)+\int_{0}^{x}f(s)F(s,x)ds,
\]
in $L^{p}\left(\left[0,T\right]\right)$. 
\end{lem}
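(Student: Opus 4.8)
The plan is to read the equation as a linear Volterra integral equation of the second kind with bounded kernel and to solve it by a fixed-point argument in a suitably renormed copy of $L^{p}\left([0,T]\right)$. Define the operator $\mathcal{T}$ on $L^{p}\left([0,T]\right)$ by
\[
(\mathcal{T}f)(x)=H(x)+\int_{0}^{x}f(s)F(s,x)\,ds.
\]
First I would check that $\mathcal{T}$ maps $L^{p}\left([0,T]\right)$ into itself: since $\|F\|_{\infty;\triangle^{(2)}\left([0,T]\right)}<\infty$ by Lemma \ref{Let existence of beta derivative integral }, Hölder's inequality with $q$ the conjugate exponent of $p$ gives
\[
\left|\int_{0}^{x}f(s)F(s,x)\,ds\right|\leq\|F\|_{\infty}\,T^{1/q}\,\|f\|_{L^{p}\left([0,T]\right)},
\]
so the integral term is in fact bounded, and together with $H\in L^{p}\left([0,T]\right)$ this yields $\mathcal{T}f\in L^{p}\left([0,T]\right)$.

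Next, to turn $\mathcal{T}$ into a contraction I would equip $L^{p}\left([0,T]\right)$ with the Bielecki-type weighted norm $\|f\|_{p,\lambda}:=\left(\int_{0}^{T}e^{-\lambda px}|f(x)|^{p}\,dx\right)^{1/p}$, which for every $\lambda>0$ is equivalent to the usual $L^{p}$-norm, so $\left(L^{p}\left([0,T]\right),\|\cdot\|_{p,\lambda}\right)$ remains a Banach space. For $f_{1},f_{2}\in L^{p}\left([0,T]\right)$, writing $g=f_{1}-f_{2}$, inserting the factor $e^{\lambda s}e^{-\lambda s}$ under the integral and applying Hölder's inequality gives
\[
e^{-\lambda x}\left|\int_{0}^{x}g(s)F(s,x)\,ds\right|\leq\|F\|_{\infty}\left(\int_{0}^{x}e^{\lambda q(s-x)}\,ds\right)^{1/q}\|g\|_{p,\lambda}\leq\frac{\|F\|_{\infty}}{(\lambda q)^{1/q}}\|g\|_{p,\lambda}.
\]
Raising to the $p$-th power and integrating over $x\in[0,T]$ yields
\[
\|\mathcal{T}f_{1}-\mathcal{T}f_{2}\|_{p,\lambda}\leq\frac{T^{1/p}\|F\|_{\infty}}{(\lambda q)^{1/q}}\,\|f_{1}-f_{2}\|_{p,\lambda},
\]
and choosing $\lambda$ large enough makes the constant strictly smaller than $1$. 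By the Banach fixed-point theorem, $\mathcal{T}$ then has a unique fixed point $f\in L^{p}\left([0,T]\right)$, which is exactly the unique solution of the integral equation; uniqueness can also be seen directly by a Grönwall estimate applied to $|f_{1}-f_{2}|$.

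The only mildly delicate point is the weighted-norm estimate above — correctly tracking the conjugate exponent and the exponential weight — but this is a routine computation and I do not expect a genuine obstacle. As an alternative route one could expand the Neumann series $f=\sum_{n\geq0}\mathcal{K}^{n}H$, where $\mathcal{K}$ denotes the Volterra part of $\mathcal{T}$, and verify that the iterated kernels satisfy $|K_{n}(s,x)|\leq\|F\|_{\infty}^{n}(x-s)^{n-1}/(n-1)!$, so that the series converges absolutely in $L^{p}\left([0,T]\right)$; this has the advantage of producing an explicit solution formula.
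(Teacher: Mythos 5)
Your argument is correct, but it takes a genuinely different route from the paper. The paper runs a Picard iteration $f_{0}=H$, $f_{n}=H+\int_{0}^{\cdot}f_{n-1}(s)F(s,\cdot)\,ds$, estimates $\left\Vert f_{n+1}-f_{n}\right\Vert _{L^{p}}$ by iterated integrals over the simplex to get factorial-type decay, concludes that $\{f_{n}\}$ is Cauchy in $L^{p}$, and passes to the limit via a subsequence and dominated convergence; this is essentially the Neumann-series route you only sketch as an alternative, and it is what later yields the explicit representation $D_{0+}^{\alpha}g=G_{0}(g)+\sum_{m\geq1}\int_{\triangle^{(m)}}G_{0}(g)F\cdots F$ used in Theorem \ref{thm.:representation and boundedness of derivative-1} (the paper even defers uniqueness to that representation). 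Your primary route instead renorms $L^{p}$ with a Bielecki weight $e^{-\lambda px}$ and shows the Volterra operator is a strict contraction for $\lambda$ large, so existence and uniqueness follow in one stroke from the Banach fixed-point theorem, with no subsequence extraction or dominated-convergence step; your weighted H\"older estimate is correct as written. What each approach buys: yours is cleaner and gives uniqueness directly, while the paper's iteration produces the explicit series solution needed downstream. Two minor points: your H\"older computations use the finite conjugate exponent $q$, so they implicitly assume $p>1$ (consistent with the paper's standing hypothesis, and the case $p=1$ needs only the trivial modification $\int_{0}^{x}e^{\lambda(s-x)}e^{-\lambda s}|g(s)|\,ds\leq\lambda^{-1}\Vert g\Vert_{1,\lambda}$); and the iterated-kernel bound $|K_{n}(s,x)|\leq\Vert F\Vert_{\infty}^{n}(x-s)^{n-1}/(n-1)!$ in your alternative is exactly the estimate underlying the paper's proof.
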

\begin{proof}
For simplicity, write $L^{p}=L^{p}\left([0,T]\right)$. We consider
a usual Picard iteration, and define 
\[
\begin{array}{cc}
f_{0}(x)=H(x)\\
f_{n}(x)=H(x)+\int_{0}^{x}f_{n-1}(s)F(s,x)ds,
\end{array}
\]
 then for $x\in[0,T]$ we use Lemma \ref{Let existence of beta derivative integral }
to see that
\[
\left\Vert f_{n+1}-f_{n}\right\Vert _{L^{p}}^{p}=\left\Vert \int_{0}^{\cdot}\left(f_{n}(s)-f_{n-1}(s)\right)F(s,\cdot)ds\right\Vert _{L^{p}}^{p}
\]
\[
\leq\parallel F\parallel_{\infty;\triangle^{(2)}\left([0,T]\right)}^{p}\int_{0}^{T}x^{p-1}\left\Vert f_{n}-f_{n-1}\right\Vert _{L^{p}\left(\left[0,x\right]\right)}^{p}dx,
\]
where we have used the H\" {o}lder inequality . By iteration, we obtain
\[
\left\Vert f_{n+1}-f_{n}\right\Vert _{L^{p}}\leq\parallel F\parallel_{\infty;\triangle^{(2)}\left([0,T]\right)}^{np}\parallel H\parallel_{L^{p}}\int_{0}^{T}...\int_{0}^{x_{n}}x_{n}^{p-1}dx_{n}...dx_{1}
\]
\[
=\parallel F\parallel_{\infty;\triangle^{(2)}\left([0,T]\right)}^{np}\frac{\Gamma\left(p\right)}{\Gamma\left(p+n\right)}T^{n+p-1}\parallel H\parallel_{L^{p}}
\]
and more generally that for $m>n$ 
\[
\left\Vert f_{m}-f_{n}\right\Vert _{L^{p}}=\left\Vert \sum_{i=n}^{m-1}f_{i+1}-f_{i}\right\Vert _{L^{p}}
\]
\[
\leq\parallel H\parallel_{L^{p}}\sum_{i=n}^{m-1}\parallel F\parallel_{\infty;\triangle^{(2)}\left([0,T]\right)}^{ip}\frac{\Gamma\left(p\right)}{\Gamma\left(p+i\right)}T^{i+p-1}.
\]
Therefore, $\left\{ f_{n}\right\} _{n\in\mathbb{N}}$ is Cauchy in
$L^{p}$ and$\left\Vert f_{m}-f_{n}\right\Vert _{L^{p}}\rightarrow0$
as $n,m\rightarrow\infty$ , and we denote $f^{\infty}$ to be its
limit. Furthermore, we can choose a sub sequence $f_{n_{k}}$ converging
almost surely and in $L^{p}$ to $f^{\infty}$. By applying dominated
convergence theorem, $f^{\infty}$ solves the original ODE, 
\[
f^{\infty}(x)=H(x)+\int_{0}^{x}f^{\infty}(s)F(s,x)ds\,\,\,in\,\,\,L^{p}\left([0,T]\right),
\]
and we set $f:=f^{\infty}.$ The uniqueness of the solution is immediate
by the linearity of the equation, and can bee seen explicit from the
representation in Theorem \ref{thm.:representation and boundedness of derivative-1}.
\end{proof}
The next lemma shows that the functional $G_{0}$ in equation (\ref{eq:Multifractional ODE})
is an element of $L^{p}$ when acting on a certain class of functions. 
\begin{lem}
\label{lem:eksistens av G0 operator}Let $g\in\mathcal{C}_{0}^{\alpha(\cdot)+\epsilon}\left([0,T];\mathbb{R}\right)$
with $\alpha\in C^{1}\left([0,T],[a,b]\right)$ for $[a,b]\subset(0,1)$
and $\epsilon<1-\alpha^{*}$. Furthermore, assume that for some $p>1,$
the regularity function $\alpha$ satisfies the inequality 
\[
\left(\alpha_{*}+\epsilon-\alpha(0)\right)\times p>-1.
\]
 Then the functional $G_{0}$ evaluated in $g$ defined by 
\[
G_{0}\left(g\right)(x)=\frac{1}{B(\alpha(x),1-\alpha(x))}\frac{d}{dx}\left(\int_{0}^{x}\frac{\Gamma\left(\alpha(t)\right)g(t)}{\left(x-t\right)^{\alpha(t)}}dt\right)
\]
is an element of $L^{p}\left(\left[0,T\right]\right)$.
\end{lem}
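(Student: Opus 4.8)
The plan is to differentiate the inner integral explicitly, isolate the singular part coming from the endpoint $t=x$, and control the remainder using the extra Hölder regularity $\epsilon$. Write $\Phi(x) := \int_0^x \Gamma(\alpha(t)) g(t) (x-t)^{-\alpha(t)}\,dt$. Since $g \in \mathcal{C}_0^{\alpha(\cdot)+\epsilon}$, the remark gives $|g(t)| \le \|g\|\, t^{\alpha(t)+\epsilon}$, so the integrand is dominated near $t=x$ by $(x-t)^{-\alpha(t)}$ with $\alpha(t)<1$, hence $\Phi$ is well defined and finite. To differentiate, I would first regularize: subtract and add $g(x)$ (and, where needed, $\Gamma(\alpha(x))$ and $\alpha(x)$) inside the integral so that the ``frozen-coefficient'' piece $\Gamma(\alpha(x)) g(x) \int_0^x (x-t)^{-\alpha(x)}\,dt = \Gamma(\alpha(x)) g(x)\, x^{1-\alpha(x)}/(1-\alpha(x))$ can be differentiated exactly, and the leftover involves differences $g(t)-g(x)$, $\alpha(t)-\alpha(x)$, etc., which carry a gain of $(x-t)^{\epsilon}$ or $|x-t|$ from the $C^1$ and Hölder hypotheses and therefore produce only integrable singularities after one differentiation in $x$.

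Concretely, $\frac{d}{dx}$ acting on $(x-t)^{-\alpha(t)}$ gives $-\alpha(t)(x-t)^{-\alpha(t)-1}$, which alone is not integrable at $t=x$; this is exactly where the regularity budget is spent. After the frozen-coefficient subtraction, the dangerous term becomes $-\int_0^x \alpha(t)\,\Gamma(\alpha(t))\,[g(t)-g(x)]\,(x-t)^{-\alpha(t)-1}\,dt$ plus similar terms with $\Gamma(\alpha(t))-\Gamma(\alpha(x))$ and with the $x$-dependence of the frozen exponent; using $|g(t)-g(x)| \lesssim |x-t|^{\alpha(t)+\epsilon}$ (the equivalence of norms in the remark lets me use the local exponent) this integrand is $O((x-t)^{\epsilon-1})$, which is integrable. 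Collecting everything, I expect a pointwise formula of the schematic form
\[
\Phi'(x) = \frac{\Gamma(\alpha(x))\, g(x)}{1-\alpha(x)}\, x^{-\alpha(x)} + \alpha'(x)\,\Gamma(\alpha(x))\, g(x)\, x^{1-\alpha(x)}\bigl(\log x\bigr)\, c(x) + R(x),
\]
where $c(x)$ is bounded and $R(x)$ is an absolutely convergent integral with integrand $O((x-t)^{\epsilon-1})$ uniformly; the precise constants are routine and I would not grind through them.

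It then remains to check $G_0(g) = \Phi'/B(\alpha(\cdot),1-\alpha(\cdot)) \in L^p([0,T])$. The factor $1/B(\alpha(x),1-\alpha(x)) = \Gamma(1)/(\Gamma(\alpha(x))\Gamma(1-\alpha(x)))$ is bounded and bounded away from $0$ since $\alpha$ takes values in the compact set $[a,b] \subset (0,1)$, so it does not affect integrability. The remainder $R$ and the logarithmic term are bounded on $[0,T]$ (the $\log x$ singularity at $0$ is killed by the positive power $x^{1-\alpha(x)}$, using $g(0)=0$ only to keep $g(x)x^{1-\alpha(x)}$ bounded), hence lie in $L^p$. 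The only genuinely borderline term is the leading one, $x \mapsto \Gamma(\alpha(x)) g(x) x^{-\alpha(x)}/(1-\alpha(x))$: since $|g(x)| \le \|g\|\, x^{\alpha(x)+\epsilon}$, this is bounded by $C\, x^{\alpha(x)+\epsilon-\alpha(x)}$... but one must be careful that the two $\alpha(x)$'s are evaluated at the same point, so in fact this term is simply $O(x^{\epsilon})$ and trivially in $L^p$. The hypothesis $(\alpha_* + \epsilon - \alpha(0))p > -1$ enters when one instead bounds the leading term by freezing $\alpha$ at $0$ near the origin — i.e. $x^{-\alpha(x)} \le C x^{-\alpha(0)}$ times a bounded factor on a neighborhood of $0$, giving $x^{\alpha_*+\epsilon-\alpha(0)}$ — and this is the exponent whose $p$-th power must be integrable at $0$. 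The main obstacle, and the step I would be most careful about, is justifying the differentiation under the integral sign and the endpoint analysis at $t=x$: one should argue via differentiating $\int_0^{x-\delta}$, controlling the boundary term at $t=x-\delta$ (which vanishes as $\delta\to 0$ precisely because $\alpha<1$), and passing to the limit using the uniform $O((x-t)^{\epsilon-1})$ domination established above.
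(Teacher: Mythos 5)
Your overall strategy --- subtract $g(x)$, use the H\"older gain $\epsilon$ to tame the differentiated kernel, note that the Beta prefactor is bounded away from zero, and isolate a borderline power of $x$ near the origin --- is the same as the paper's, but there is a genuine gap at exactly the step you yourself flag as the main obstacle: justifying the differentiation. Your ``frozen-coefficient piece'' $\Gamma(\alpha(x))\,g(x)\,x^{1-\alpha(x)}/(1-\alpha(x))$ cannot be ``differentiated exactly'', because it carries the factor $g(x)$, which is only H\"older continuous; likewise the leftover $\int_0^x(\cdots)\,(g(t)-g(x))\,dt$ contains $g(x)$ in its integrand, so neither piece is differentiable in $x$ on its own. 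The derivative of $\Phi$ exists only because the increments $g(x+h)-g(x)$ multiplying $\int_0^x K(x+h,t)\,dt$ cancel exactly between the two pieces; the paper devotes its appendix to precisely this difference-quotient cancellation, and your schematic formula for $\Phi'$ silently assumes it (no derivative of $g$ appears) without argument. Your proposed repair via truncation at $x-\delta$ does not fix this: the boundary term one must control is $K(x,x-\delta)\,g(x-\delta)\approx\Gamma(\alpha(x))\,g(x)\,\delta^{-\alpha(x)}$, which blows up as $\delta\to0$ rather than vanishing ``because $\alpha<1$''; it vanishes (like $\delta^{\epsilon}$) only after the $g(x)$-subtraction, and then because of the H\"older gain $\epsilon$, so the justification fails as written.

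On the bookkeeping of the borderline term your route also diverges from the paper's in a way worth noting. The paper does not freeze the exponent at $x$; it substitutes $u=x-t$ in $\int_0^xK(x,t)\,dt$ and applies the Leibniz rule, so the boundary contribution is $g(x)\,\Gamma(\alpha(0))\,x^{-\alpha(0)}$ --- exponent pinned at $t=0$, not at $t=x$ --- and it is precisely to put this term in $L^p$ (after the crude bound $|g(x)|\lesssim x^{\alpha_*+\epsilon}$) that the hypothesis $(\alpha_*+\epsilon-\alpha(0))\,p>-1$ is invoked. Your $O(x^{\epsilon})$ bound for your leading term is consistent with this (since $\alpha\in C^1$, $x^{\alpha(x)-\alpha(0)}$ is bounded near $0$), and the stray factor $1/(1-\alpha(x))$ in your leading coefficient is harmless, but your account of where the hypothesis enters is a post-hoc rationalization rather than a step of your argument. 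None of this rescues the differentiation step; until the cancellation of the $g$-increments (or an equivalent argument, as in the paper's appendix) is supplied, the proof is incomplete.
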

\begin{proof}
Let us first look at $G_{0}(g)(x)$, but ignoring the factor with
the Beta function as this is behaving well already, i.e, there exists
a $\beta>0$ such that 
\[
0<\beta\leq B(\alpha(x),1-\alpha(x))\,\,\,for\,\,\,all\,\,\,x\in[0,T]
\]
 as $\alpha(x)\in[a,b]$ for all $x\in[0,T]$. Therefore, we need
to prove that 
\[
\tilde{G}_{0}(g)(x)=\frac{d}{dx}\left(\int_{0}^{x}\frac{\Gamma\left(\alpha(t)\right)g(t)}{\left(x-t\right)^{\alpha(t)}}dt\right)=:\frac{d}{dx}\left(\int_{0}^{x}K(x,t)g(t)dt\right)
\]
 is an element of $L^{p}\left([0,T]\right).$ In particular, we will
show that the integral above is in fact continuously differentiable
on $(0,T],$ and then we show that it is an element in $L^{p}\left([0,T]\right)$.
Expanding the integral by adding and subtracting the point $g(x)$,
we can see that, 
\[
\int_{0}^{x}K(x,t)g(t)dt=g(x)\int_{0}^{x}K(x,t)dt-\int_{0}^{x}K(x,t)\left(g(x)-g(t)\right)dt.
\]
Then one can show that 
\[
\frac{d}{dx}\left(\int_{0}^{x}K(x,t)g(t)dt\right)
\]
\[
=g(x)\frac{d}{dx}\int_{0}^{x}K(x,t)dt-\int_{0}^{x}\frac{d}{dx}K(x,t)\left(g(x)-g(t)\right)dt.
\]
 See the appendix in section \ref{sec:Apendix} for proof a of this
relation, which is based on using the definition of the derivative.
In this way, we can make sense of this derivative by proving that
the above two terms are elements of $L^{p}\left(\left[0,T\right]\right)$
when $g\in\mathcal{C}^{\alpha\left(\cdot\right)+\epsilon}\left(\left[0,T\right]\right)$.
\\
\\
Writing the second term explicitly, we have by straight forward derivation
that 
\[
-\int_{0}^{x}\frac{d}{dx}K(x,t)\left(g(x)-g(t)\right)dt=\int_{0}^{x}\alpha(t)\frac{\Gamma\left(\alpha(t)\right)\left(g(x)-g(t)\right)}{(x-t)^{\alpha(t)+1}}dt.
\]
 Notice that $\frac{d}{dx}K(x,t)$ is  singular, but since $g\in\mathcal{C}_{0}^{\alpha(\cdot)+\epsilon}\left([0,T]\right)$,
we get the estimate
\[
\left|\int_{0}^{x}\frac{d}{dx}K(x,t)\left(g(x)-g(t)\right)dt\right|
\]
\[
\leq\parallel g\parallel_{\mathcal{C}^{\alpha(\cdot)+\epsilon}\left([0,T]\right)}\times\int_{0}^{x}\alpha(t)\left|\Gamma\left(\alpha(t)\right)\right|\left|x-t\right|^{\max(\alpha(x),\alpha(t))-\alpha(t)+\epsilon-1}dt=:\parallel g\parallel_{\mathcal{C}^{\alpha(\cdot)+\epsilon}\left([0,T]\right)}P_{1}(x,\epsilon).
\]
The gamma function is well defined on any $[a,b]\subset(0,1)$, and
$\alpha$ is $C^{1}$ and we know 
\[
\max\left(\alpha\left(x\right),\alpha\left(t\right)\right)-\alpha\left(t\right)=\max\left(\alpha(x)-\alpha(t),0\right)\geq0,
\]
 we have that the singularity is integrable, hence the above is well
defined for all $x\in[0,T]$. The function $P_{1}$ is dependent on
both $x$ and $\epsilon$, but also on $\alpha$, and we can see that
\[
\left|P_{1}(x,\epsilon)\right|\leq C(\alpha)\frac{x^{\epsilon}}{\epsilon}.
\]
 Therefore, $P_{1}$ is $L^{p}$ integrable, for all $p$ as long
as $\epsilon>0$. \\
\\
We are left to prove that the first term is an element of $L^{p}$,
i.e., we need to show that 
\[
x\mapsto g(x)\frac{d}{dx}\int_{0}^{x}\frac{\Gamma\left(\alpha(t)\right)}{(x-t)^{\alpha(t)}}dt\in L^{p}\left(\left[0,T\right]\right).
\]
 Write $M(x)=\int_{0}^{x}\frac{\Gamma\left(\alpha(t)\right)}{(x-t)^{\alpha(t)}}dt,$
and set $u=x-t,$ then by a variable change $M(x)=\int_{0}^{x}\frac{\Gamma\left(\alpha(x-u)\right)}{u^{\alpha(x-u)}}du,$
and by Leibniz integral rule, we have for all $x\in(0,T],$
\[
\frac{d}{dx}M(x)=\Gamma\left(\alpha(0)\right)x^{-\alpha(0)}+\int_{0}^{x}\frac{d}{dx}\left(\frac{\Gamma\left(\alpha(x-u\right)}{u^{\alpha(x-u)}}\right)du.
\]
 and 
\[
g(x)\frac{d}{dx}M(x)=g(x)M_{1}(x)+g(x)M_{2}(x).
\]
The derivative inside the integral in $M_{2}$ can be calculated explicitly
as follows, 
\[
\frac{d}{dx}\left(\frac{\Gamma\left(\alpha(x-u)\right)}{u^{\alpha(x-u)}}\right)=-u^{-\alpha(x-u)}\alpha'(x-u)\Gamma\left(\alpha(x-u)\right)\left(\ln\left(u\right)-\psi_{0}\left(\alpha(x-u)\right)\right),
\]
where $\psi_{0}$ is the digamma function. First we have that 
\[
|g(x)M_{1}(x)|\leq\Gamma\left(\alpha(0)\right)x^{\alpha(x)+\epsilon-\alpha(0)}\lesssim x^{\alpha_{*}+\epsilon-\alpha(0)},
\]
 which is in $L^{p}\left([0,T]\right)$ if $\left(\alpha_{*}+\epsilon-\alpha(0)\right)\times p>-1.$
\\
Secondly we look at 
\[
g(x)M_{2}(x)=g(x)\int_{0}^{x}-u^{-\alpha(x-u)}\alpha'(x-u)\Gamma\left(\alpha(x-u)\right)\ln\left(u\right)du
\]
\[
+g(x)\int_{0}^{x}-u^{-\alpha(x-u)}\alpha'(x-u)\Gamma\left(\alpha(x-u)\right)\psi_{0}\left(\alpha(x-u)\right)du=:g(x)M_{2,1}(x)+g(x)M_{2,2}(x)
\]
 The first term is well defined since $\alpha$ is a differentiable
function on a compact interval, and the gamma function is decreasing
on some compact sub interval of $(0,1)$, therefore, 
\[
|g(x)M_{2,1}(x)|\leq\parallel g\parallel_{\alpha(\cdot);[0,T]}\parallel\alpha\parallel_{C^{1}}\Gamma\left(\inf(\alpha(t))\right)\frac{x^{1+\alpha(x)-\alpha^{*}}\left(\left(\alpha^{*}-1\right)\ln(x)+1\right)}{\left(\alpha^{*}-1\right)^{2}},
\]
where $\alpha^{*}=\sup_{x\in[0,T]}\alpha(x)$ . The above is well
defined as an element of $L^{p}\left([0,T]\right)$ for any $p$.
At last, for $g(x)M_{2,2}(x)$ we get a very similar estimate as for
$M_{2,1}$ by using that the digamma function is bounded on a compact
sub interval of $(0,1),$ and in fact for $x\in[a,b]\subset(0,1)$
, $|\psi_{0}\left(x\right)|\leq|\psi_{0}\left(a\right)|$. Therefore
we have that 
\[
x\mapsto g(x)\frac{d}{dx}M(x)\in L^{p}\left([0,T]\right)
\]
 for any $p$ satisfying the inequality $\left(\alpha_{*}+\epsilon-\alpha(0)\right)\times p>-1$. 

Combining the results above, we obtain that the $G_{0}$ functional
in our representation is well behaved on the space $\mathcal{C}_{0}^{\alpha(\cdot)+\epsilon}\left(\left[0,T\right]\right)$
and satisfies
\[
\left|G_{0}(g)(x)\right|\leq C(T,\epsilon,\alpha;x)\parallel g\parallel_{\alpha(\cdot)+\epsilon;[0,T]}.
\]
 By the above lemma, we know that 
\[
x\mapsto G_{0}(x)=\frac{1}{B(\alpha(x),1-\alpha(x))}\frac{d}{dx}\left(\int_{0}^{x}\frac{\Gamma\left(\alpha(t)\right)g(t)}{\left(x-t\right)^{\alpha(t)}}dt\right)
\]
 is in $L^{p}\left(\left[0,T\right]\right)$ for a some well chosen
$p$.
\end{proof}
From the last result, we now have a concrete way to define a multifractional
derivative which is the inverse operator of the generalized Riemann-Liouville
integral. 
\begin{cor}
\label{Def. Multifractional Derivative}Under the assumptions from
Lemma \ref{lem:eksistens av G0 operator}, define the multifractional
derivative $D_{0+}^{\alpha}g$ of a function $g\in\mathcal{C}^{\alpha(\cdot)+\epsilon}\left([0,T]\right)$
to be the solution of the differential equation 
\[
f(x)=G_{0}(g)(x)+\int_{0}^{x}f(s)F(s,x)ds,
\]
where $G_{0}$ (dependent on $g$) and $F$ is given as in Lemma \ref{lem:eksistens av G0 operator}
and \ref{Let existence of beta derivative integral }. 
\end{cor}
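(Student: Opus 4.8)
The plan is to read this Corollary as a pure assembly of the three lemmas just proved, so that essentially nothing new has to be produced beyond checking that their hypotheses are mutually compatible and that the resulting object is well defined. First I would fix $g\in\mathcal{C}_{0}^{\alpha(\cdot)+\epsilon}\left([0,T]\right)$ with $\alpha\in C^{1}\left([0,T],[a,b]\right)$, $\epsilon<1-\alpha^{*}$, and $p>1$ satisfying $\left(\alpha_{*}+\epsilon-\alpha(0)\right)p>-1$, and set $H:=G_{0}(g)$. By Lemma \ref{lem:eksistens av G0 operator} these are exactly the conditions under which $H\in L^{p}\left([0,T]\right)$, and moreover one has the pointwise bound $\left|G_{0}(g)(x)\right|\leq C(T,\epsilon,\alpha;x)\parallel g\parallel_{\alpha(\cdot)+\epsilon;[0,T]}$, which will be recycled at the end for a continuity estimate.

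Next I would note that Lemma \ref{Let existence of beta derivative integral } guarantees that the kernel $F$ occurring in the integral equation is bounded on $\triangle^{(2)}\left([0,T]\right)$, with $\parallel F\parallel_{\infty;\triangle^{(2)}([0,T])}\leq C(\alpha)\parallel\alpha\parallel_{C^{1}\left([0,T]\right)}<\infty$. Hence the hypotheses of the Picard-iteration lemma are met with this particular $H$ and this $F$, and I would invoke it verbatim to produce a function $f\in L^{p}\left([0,T]\right)$ solving
\[
f(x)=G_{0}(g)(x)+\int_{0}^{x}f(s)F(s,x)ds
\]
together with its uniqueness in $L^{p}\left([0,T]\right)$. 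Uniqueness is precisely what makes the assignment $g\mapsto f$ unambiguous, so one may legitimately \emph{define} $D_{0+}^{\alpha}g:=f$; linearity of the equation then makes $D_{0+}^{\alpha}$ a linear operator on $\mathcal{C}_{0}^{\alpha(\cdot)+\epsilon}\left([0,T]\right)$, and combining the two displayed bounds yields a quantitative estimate of the form $\parallel D_{0+}^{\alpha}g\parallel_{L^{p}}\lesssim\parallel g\parallel_{\alpha(\cdot)+\epsilon;[0,T]}$.

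Finally, to justify that the object so defined deserves to be called a derivative, I would point back to the formal computation preceding the statement: starting from $g=I_{0+}^{\alpha}f$ with $f\in L^{p}\left([0,T]\right)$, the Fubini step and the differentiation in $x$ carried out there show that any such $f$ must solve the above ODE; by the uniqueness just established this forces $D_{0+}^{\alpha}I_{0+}^{\alpha}f=f$, i.e. $D_{0+}^{\alpha}$ is a genuine left inverse of $I_{0+}^{\alpha}$ on the relevant class. The only point that requires real care — and the step I expect to be the main obstacle — is verifying that those manipulations (the interchange of integration orders, and in particular the differentiation in $x$ of the singular inner integral, which is the relation established in the Appendix, Section \ref{sec:Apendix}) are rigorous on $\mathcal{C}_{0}^{\alpha(\cdot)+\epsilon}$ rather than merely heuristic. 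Once Lemma \ref{lem:eksistens av G0 operator} and the Appendix are in hand this reduces to the bookkeeping already done there, but it is the conceptual crux of why this particular ODE, and not some other equation, is the correct definition.
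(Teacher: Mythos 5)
Your proposal is correct and follows essentially the same route as the paper: the corollary is nothing more than the assembly of Lemma \ref{Let existence of beta derivative integral } (boundedness of $F$), the Picard-iteration lemma (existence and uniqueness in $L^{p}$ for $H\in L^{p}$), and Lemma \ref{lem:eksistens av G0 operator} ($G_{0}(g)\in L^{p}$ under the stated conditions on $\alpha$, $\epsilon$, $p$), with uniqueness making the assignment $g\mapsto f=D_{0+}^{\alpha}g$ well defined. Your closing discussion of the inverse property and the Appendix computation corresponds to the remark the paper places immediately after the corollary, so nothing is missing.
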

\begin{rem}
The derivative operator that we have defined above is indeed the inverse
operator of the multifractional integral. To see this, assume for
a moment that for a function $g\in\mathcal{C}^{\alpha(\cdot)+\epsilon}$,
we have the derivative 
\[
D_{0+}^{\alpha}g(x)=f(x)\,\,\,for\,\,\,f\in L^{p}\left(\left[0,T\right]\right)
\]
then we can look at equation (\ref{eq:Multifractional ODE}), which
we now know is well posed, and we can go backwards in the derivation
method, which is found in the beginning of section \ref{sec:Multifractional-Calculus},
to find that 
\[
g(t)=\frac{1}{\Gamma\left(\alpha\left(t\right)\right)}\int_{0}^{t}(t-s)^{\alpha\left(t\right)-1}f(s)ds=\left(I_{0+}^{\alpha}f\right)(t)=\left(I_{0+}^{\alpha}\left(D_{0+}^{\alpha}g\right)\right)(t),
\]
and hence $g\in I_{0+}^{\alpha}L^{p}\left(\left[0,T\right]\right)$.

By simple iterations of the above ODE, we can get an explicit representation
of this multifractional derivative. 
\end{rem}
\begin{thm}
\label{thm.:representation and boundedness of derivative-1}The multifractional
derivative can be represented in $L^{p}\left([0,T]\right)$ as the
infinite sequence of integrals 
\[
D_{0+}^{\alpha}g(x)=G_{0}(g)\left(x\right)+\sum_{m=1}^{\infty}\int_{\triangle^{(m)}\left(0,x\right)}G_{0}(g)\left(s_{m+1}\right)F(s_{m+1},s_{m})\times...\times F\left(s_{1},x\right)ds_{m+1}...ds_{1},
\]
 for any $g\in\mathcal{C}_{0}^{\alpha(\cdot)+\epsilon}\left([0,T];\mathbb{R}\right)$
with $\alpha\in C^{1}\left([0,T],[a,b]\right)$ for $[a,b]\subset(0,1)$
and $\epsilon<1-\alpha^{*}$. Furthermore, assume that for some $p>1,$
the regularity function $\alpha$ satisfies the inequality 
\[
\left(\alpha_{*}+\epsilon-\alpha(0)\right)\times p>-1.
\]
Then we have the estimate 
\[
\left|D_{0+}^{\alpha}g(x)\right|\leq C(T,\epsilon,\alpha;x)\parallel g\parallel_{\alpha(\cdot)+\epsilon;[0,T]},
\]
where $x\mapsto C(T,\epsilon,\alpha;x)$ is an $L^{p}$ function for
any $p$ satisfying the inequality and hence $D_{0+}^{\alpha}g\in L^{p}\left([0,T]\right).$
\end{thm}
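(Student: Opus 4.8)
The plan is to obtain the series representation by iterating the fixed-point equation of Corollary \ref{Def. Multifractional Derivative}, and then to read off the pointwise estimate term-by-term using Lemma \ref{Let existence of beta derivative integral } and Lemma \ref{lem:eksistens av G0 operator}. First I would recall that by Corollary \ref{Def. Multifractional Derivative} the function $f=D_{0+}^{\alpha}g$ is the unique $L^p$ solution of $f(x)=G_0(g)(x)+\int_0^x f(s)F(s,x)\,ds$, and that by Lemma \ref{lem:eksistens av G0 operator} we have $G_0(g)\in L^p([0,T])$ together with the pointwise bound $|G_0(g)(x)|\le C(T,\epsilon,\alpha;x)\parallel g\parallel_{\alpha(\cdot)+\epsilon;[0,T]}$. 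Substituting the equation into itself repeatedly gives, after $N$ steps,
\[
f(x)=G_0(g)(x)+\sum_{m=1}^{N}\int_{\triangle^{(m)}(0,x)}G_0(g)(s_{m+1})F(s_{m+1},s_m)\cdots F(s_1,x)\,ds_{m+1}\cdots ds_1+R_N(x),
\]
where $R_N$ is the remainder obtained by leaving $f$ under the innermost $N$-fold integral. So the core of the argument is (i) the iterated-kernel bookkeeping that produces the nested simplex integrals, and (ii) showing $R_N\to 0$ in $L^p$.

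For step (ii), the boundedness $\parallel F\parallel_{\infty;\triangle^{(2)}([0,T])}<\infty$ from Lemma \ref{Let existence of beta derivative integral } is exactly what is needed: the remainder is $R_N(x)=\int_{\triangle^{(N)}(0,x)}f(s_N)F(s_N,s_{N-1})\cdots F(s_1,x)\,ds_N\cdots ds_1$, and the same Picard-type estimate as in the second lemma of the section (applying H\"older's inequality in each variable and computing the iterated integral of $x^{p-1}$ over a simplex) gives $\parallel R_N\parallel_{L^p}\le \parallel F\parallel_{\infty;\triangle^{(2)}([0,T])}^{Np}\,\frac{\Gamma(p)}{\Gamma(p+N)}T^{N+p-1}\parallel f\parallel_{L^p}\to 0$ as $N\to\infty$, because the Gamma quotient decays super-exponentially. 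The same computation shows the series of simplex integrals converges absolutely in $L^p$, so passing to the limit yields the claimed representation in $L^p([0,T])$. This also retroactively justifies the uniqueness claim in the earlier lemma, since any two $L^p$ solutions have difference satisfying $u=\int_0^\cdot u(s)F(s,\cdot)\,ds$, hence $\parallel u\parallel_{L^p}\le \parallel F\parallel_\infty^{Np}\frac{\Gamma(p)}{\Gamma(p+N)}T^{N+p-1}\parallel u\parallel_{L^p}$ for all $N$, forcing $u=0$.

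For the pointwise estimate, I would bound the $m$-th term of the series by $\parallel F\parallel_{\infty;\triangle^{(2)}([0,T])}^{m}$ times $\sup_{s\le x}|G_0(g)(s)|$ times the volume $\frac{x^m}{m!}$ of $\triangle^{(m)}(0,x)$ — but one must be careful, since $C(T,\epsilon,\alpha;s)$ is not bounded near $0$ (it contains factors like $s^{\alpha_*+\epsilon-\alpha(0)}$ and $\ln s$), only $L^p$. The clean route is to keep $G_0(g)$ inside the integral: write the $m$-th term's absolute value as $\le \parallel F\parallel_\infty^m \int_{\triangle^{(m)}(0,x)}|G_0(g)(s_{m+1})|\,ds_{m+1}\cdots ds_1$, integrate out $s_1,\dots,s_m$ to get $\frac{1}{(m-1)!}\int_0^x (x-s_{m+1})^{m-1}|G_0(g)(s_{m+1})|\,ds_{m+1}$ (up to constants), and then define
\[
C(T,\epsilon,\alpha;x):=C_{G_0}(x)+\sum_{m=1}^{\infty}\frac{\parallel F\parallel_{\infty;\triangle^{(2)}([0,T])}^{m}}{(m-1)!}\int_0^x (x-s)^{m-1}C_{G_0}(s)\,ds,
\]
where $C_{G_0}$ is the constant from Lemma \ref{lem:eksistens av G0 operator}. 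Each summand is a finite convolution of the $L^p$ function $C_{G_0}$ with the $L^1$ (indeed bounded) kernel $(x-s)^{m-1}/(m-1)!$ on $[0,T]$, hence $L^p$ by Young's inequality with $L^p$-norm at most $\parallel F\parallel_\infty^m \frac{T^m}{m!}\parallel C_{G_0}\parallel_{L^p}$; summing the geometric-type series shows $C(T,\epsilon,\alpha;\cdot)\in L^p([0,T])$ for every $p$ satisfying $(\alpha_*+\epsilon-\alpha(0))p>-1$, which is the same constraint that made $C_{G_0}\in L^p$. This gives $|D_{0+}^\alpha g(x)|\le C(T,\epsilon,\alpha;x)\parallel g\parallel_{\alpha(\cdot)+\epsilon;[0,T]}$ and $D_{0+}^\alpha g\in L^p$.

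The main obstacle is not any single hard estimate but keeping the two kinds of control separate: the $L^p$-convergence of the Picard series (which only uses $\parallel F\parallel_\infty$ and Gamma-quotient decay, and works for any $L^p$ data) versus the pointwise bound with an honest $L^p$-valued constant. The subtlety is that $C_{G_0}$ blows up at $0$, so one cannot simply pull a sup-norm of $G_0(g)$ out of the simplex integral; the fix, as above, is to integrate the iterated kernel against $C_{G_0}$ and invoke Young's convolution inequality, which is why the final constant is itself defined as a convergent series of convolutions rather than a closed-form expression.
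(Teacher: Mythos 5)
Your proposal is correct and follows essentially the same route as the paper: iterate the fixed-point equation defining $D_{0+}^{\alpha}g$ to generate the simplex-integral series, bound each term by $\parallel F\parallel_{\infty;\triangle^{(2)}([0,T])}^{m}$ times an iterated integral of $\left|G_{0}(g)\right|$, collapse it with Cauchy's repeated-integration formula, and sum. Your extra care — explicitly sending the Picard remainder $R_{N}$ to zero in $L^{p}$ via the Gamma-quotient decay, and packaging the pointwise constant as a convergent series of convolutions controlled by Young's inequality rather than the paper's direct exponential bound — only tightens steps the paper treats as immediate (the off-by-one in your $(m-1)!$ versus the paper's $m!$ is immaterial).
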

\begin{proof}
The representation is immediate as a consequence of Definition \ref{Def. Multifractional Derivative}.
We know from Lemma \ref{lem:eksistens av G0 operator} and \ref{Let existence of beta derivative integral }
that 
\[
\left|G_{0}(g)(x)\right|\leq C(T,\epsilon,\alpha;x)\parallel g\parallel_{\alpha(\cdot)+\epsilon;[0,T]}
\]
and 
\[
x\mapsto C(T,\epsilon,\alpha;x)\in L^{p}\left([0,T]\right)
\]
 for any $p$ satisfying the inequality $(\alpha_{*}+\epsilon-\alpha(0)\times p)>-1.$
\\
 We therefore obtain 

\[
\left|\int_{\triangle^{(m)}\left(0,x\right)}G_{0}(g)\left(s_{m+1}\right)F(s_{m+1},s_{m})\times...\times F\left(s_{1},x\right)ds_{m+1}...ds_{1}\right|
\]
\[
\leq\parallel F\parallel_{\infty;\triangle\left([0,T]\right)}^{m}\int_{\triangle^{(m)}\left(0,x\right)}\left|G_{0}(g)\left(s_{m+1}\right)\right|ds_{m+1}...ds_{1}
\]
\[
\leq C\parallel F\parallel_{\infty;\triangle\left([0,T]\right)}^{m}\parallel g\parallel_{\alpha(\cdot)+\epsilon}\int_{\triangle^{(m)}\left(0,x\right)}C(T,\epsilon,\alpha;s_{m+1})ds_{m+1}...ds_{1}.
\]
We can then use Cauchy's formula for repeated integration given by
\[
\int_{\triangle^{(m)}\left(0,x\right)}f(s_{m+1})ds_{m+1}...ds_{1}=\frac{1}{m!}\int_{0}^{x}(x-t)^{m}f(t)dt.
\]
 Using this, we get the estimate on the representation 
\[
|D_{0+}^{\alpha}g(x)|\leq C(T,\epsilon,\alpha;x)\parallel g\parallel_{\alpha(\cdot)+\epsilon;[0,T]}
\]
\[
+\parallel g\parallel_{\alpha(\cdot)+\epsilon}\sum_{m=1}^{\infty}\int_{0}^{x}\parallel F\parallel_{\infty;\triangle\left([0,T]\right)}^{m}\frac{(x-t)^{m}}{m!}C(T,\epsilon,\alpha;t)dt
\]
\[
\leq C(T,\epsilon,\alpha;x)\parallel g\parallel_{\alpha(\cdot)+\epsilon}\exp\left(\parallel F\parallel_{\infty;\triangle\left([0,T]\right)}x\right)
\]
 which completes the proof. 
\end{proof}
\begin{rem}
\label{rem: H\" {o}lder included in mutifractional -1}With the last Theorem
\ref{thm.:representation and boundedness of derivative-1}, we can
see that the multifractional derivative operator is well defined on
any local H\" {o}lder continuous space of order $\alpha(\cdot)+\epsilon$
for some small $\epsilon$, as long as $p$ and $\epsilon$ satisfies
$\left(\alpha_{*}+\epsilon-\alpha(0)\right)\times p>-1$. Therefore,
we have the relation 
\[
\mathcal{C}^{\alpha(\cdot)+\epsilon}\left(\left[0,T\right];\mathbb{R}\right)\subset I_{c+}^{\alpha}L^{p}\left([0,T]\right).
\]
\end{rem}

\subsection{Remark on multifractional Calculus}

Most of the articles we have found on the multifractional calculus
or fractional calculus of variable order has been related to applications
in physics. In an article by Hartley and Lorenzo \cite{Lorenzo2002}
the authors suggest many applications of multifractional calculus
in physics. Particular examples are given when physical phenomena
is modeled by a fractional differential equation, i.e. 
\[
D_{0+}^{\alpha}y(t)=f(t,y(t)),\,\,\,y(0)=y_{0}\in\mathbb{R},
\]
but the fractional order parameter $\alpha$ is dependent on a variable,
which again is dependent on time. An example could be that $\alpha$
was estimated on the basis of temperature, but temperature changes
in time. Therefore, they suggest that by using multifractional differential
operators, one can overcome this problem, as one would be able to
give a different differential order $\alpha$ to different times.
The multifractional calculus enables us to construct more accurate
differential equations for processes where the local time regularity
of the process is depending on time. Of course, this can also be generalized
to multifractional differential operators in space, where the spatial
variables of a system has time dependent local regularity.\\
\\
Although the multifractional calculus seems like a suitable tool for
differential equations described above, the soul concept of multifractional
or even fractional calculus can be very difficult to grasp, and use
in practice. When considering fractional calculus we usually consider
an operator behaving well (as inverse, etc. ) with respect to a fractional
integral. This makes the operator dependent on the initial point of
the integral, and it is not linear with respect to this initial point,
in the sense that $I_{a+}^{\alpha}I_{b+}^{\alpha}\neq I_{(a+b)+}^{\alpha}$.
Furthermore, we can not fractionally differentiate a constant (different
than $0$), as it is not contained in $I_{a+}^{\alpha}L^{p}$. There
is therefore a strong dependence of the derivative operator on the
choice of the integral (as we have seen it is actually completely
determined by the chosen integral). The intuition of the framework
is therefore far from that of the regular calculus of Newton and Leibniz,
and a more rigorous understanding of the properties of multifractional
calculus is needed to consider differential equations and partial
differential equations with such operators.  

\section{Riemann-Liouville multifractional Brownian Motion}

In this section we give some preliminary results relating to the multifractional
Brownian motion (mBm) with Riemann Liouville and present a Girsanov
transform for this process. The multifractional Brownian motion (fBm)
was first proposed in the 1990's by Peltier and L\`{e}vy-Veh\`{e}l in \cite{Peltier}
and independently by Benassi, Jaffard, Roux in \cite{BenJafRed}.
The process is non-stationary and on very small time steps it behaves
like a fractional Brownian motion. However, by letting the Hurst parameter
in the fractional Brownian motion be a function of time, the H\" {o}lder
regularity of the process is depending on time, and therefore it makes
more sense to talk about local regularities rather than global. The
process was initially proposed as a generalization with respect to
the fBm representation given by Mandelbrot and Van-Ness, that is,
the mBm was defined by 
\[
\tilde{B}_{t}^{h}=c(h_{t})\int_{-\infty}^{0}(t-s)^{h_{t}-\frac{1}{2}}-\left(-s\right)^{h_{t}-\frac{1}{2}}dB_{s}+c(h_{t})\int_{0}^{t}(t-s)^{h_{t}-\frac{1}{2}}dB_{s}=:\tilde{B}_{t}^{(1),h}+\tilde{B}_{t}^{(2),h},
\]
where $\left\{ B_{t}\right\} _{t\in[0,T]}$ is a real valued Brownian
motion, and $h:[0,T]\rightarrow(0,1)$ is a continuous function. Notice
in the above representation that $\tilde{B}_{t}^{(1),h}$ is always
measurable with respect to the filtration $\tilde{\mathcal{F}}_{0}$
(generated by the Brownian motion), as the stochastic process only
``contributes'' from $-\infty$ to $0.$ Therefore, we can think
of $\tilde{B}_{t}^{(2),h}$ as the only part which contributes to
the stochasticity of $\tilde{B}_{t}^{h}$ when $t>0$. The reason
why one also considers the process $\tilde{B}_{t}^{(1),h}$ when analyzing
regular fractional Brownian motions (in the case $h(t)=H$) is to
ensure stationarity of the process. However, when we are considering
the generalization $\tilde{B}_{t}^{h}$ above, when $h$ is not constant,
we do not get stationary of the process even though we consider the
a representation as the one above. We are therefore inclined to choose
$\tilde{B}_{t}^{(2),h}$ to be the multifractional noise we consider
in this article due to its very simplistic nature. This multifractional
process is often called in the literature the $Riemann$-$Liouville$
multifractional Brownian motion, inspired by the original definition
of the fractional Brownian motion defined by L\`{e}vy in the 1940's. The
Riemann-Liouville multifractional Brownian motion was first analyzed
by S. C. Lim in \cite{SCLim}, and is well suited to the use of multifractional
calculus, constructed above, in the analysis of differential equations
driven by this process. In this section we will recite some of the
basic properties of the Riemann-Liouville multifractional Brownian
motion from \cite{SCLim}, and then we will use the next section to
apply this process with respect to the Girsanov theorem. \\
\\
We begin by giving a proper definition of what we mean by a Riemann-Liouville
multifractional Brownian motion. 
\begin{defn}
\label{Def. multifractional Brownin motion}Let $\left\{ B_{t}\right\} _{t\in[0,T]}$
be a one dimensional Brownian motion on a filtered probability space
$\left(\Omega,\mathcal{F},P\right)$, and let $h:[0,T]\rightarrow[a,b]\subset(0,1)$
be a $C^{1}$ function. We define the Riemann Liouville multifractional
Brownian motion $(RLmBm)$ $\left\{ B_{t}^{h}\right\} _{t\in[0,T]}$
by 
\[
B_{t}^{h}=\frac{1}{\Gamma\left(h_{t}+\frac{1}{2}\right)}\int_{0}^{t}(t-s)^{h_{t}-\frac{1}{2}}dB_{s};\,\,t\geq0,
\]
 where $\Gamma$ is the Gamma function. The function $h$ is called
the regularity function of $B_{\cdot}^{h}$ . 
\end{defn}
With this definition, it is useful to know the co-variance function
of the process that we are interested in. Although this function is
slightly more complicated than the co-variance of a regular fBm, or
even more complicated than that proposed for mBm's in article \cite{AyCoVe},
the co-variance obtained here is explicit, and suits our purposes
well. The below proposition can also be found in \cite{SCLim}, and
the proof is a simple exercise in standard integration. 
\begin{prop}
\label{prop:(Covariance-of-RLmBm)}(Co-variance of RLmBm) let the
RLmBm be defined as above. Then for $0\leq s\leq t\leq T$, we have
\[
R^{h}\left(t,s\right)=E\left[B_{t}^{h}B_{s}^{h}\right]=\frac{t^{h_{t}-\frac{1}{2}}s^{h_{s}+\frac{1}{2}}}{\Gamma\left(h_{t}+\frac{1}{2}\right)\Gamma\left(h_{s}+\frac{3}{2}\right)}F\left(1;\frac{1}{2}-h_{t};h_{s}+\frac{3}{2};\frac{t}{s}\right)
\]
where $F$ is the hyper-geometric function. 
\end{prop}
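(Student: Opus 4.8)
The plan is to compute $R^h(t,s)=E[B_t^h B_s^h]$ directly from the Wiener-integral representation in Definition \ref{Def. multifractional Brownin motion} by means of the Itô isometry, and then to recognise the resulting deterministic integral as a Gauss hypergeometric function through its Euler integral representation. For fixed $t$ the kernel $u\mapsto (t-u)^{h_t-\frac12}$ is deterministic and square integrable on $[0,t]$, since $\int_0^t (t-u)^{2h_t-1}\,du=\frac{t^{2h_t}}{2h_t}<\infty$ (as $h_t>0$), so $B_t^h$ is a centered Gaussian variable to which the Wiener-integral isometry applies.

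First I would apply the isometry and use that, for $0\le s\le t$, the two kernels have overlapping support $[0,s]$, obtaining
\[
R^h(t,s)=\frac{1}{\Gamma\!\left(h_t+\frac12\right)\Gamma\!\left(h_s+\frac12\right)}\int_0^{s}(t-u)^{h_t-\frac12}(s-u)^{h_s-\frac12}\,du .
\]
The integral is finite: $(s-u)^{h_s-\frac12}$ has an integrable singularity at $u=s$ because $h_s-\frac12>-1$, and $(t-u)^{h_t-\frac12}$ stays bounded on $[0,s]$ since $t-u\ge t-s>0$.

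Next I would substitute $u=s\tau$, $\tau\in[0,1]$, and factor $t^{h_t-\frac12}$ out of $(t-s\tau)^{h_t-\frac12}$, which gives
\[
\int_0^{s}(t-u)^{h_t-\frac12}(s-u)^{h_s-\frac12}\,du = s^{h_s+\frac12}\,t^{h_t-\frac12}\int_0^{1}(1-\tau)^{h_s-\frac12}\left(1-\tfrac{s}{t}\tau\right)^{h_t-\frac12}\,d\tau .
\]
Comparing the remaining integral with Euler's representation $F(a,b;c;z)=\frac{\Gamma(c)}{\Gamma(b)\Gamma(c-b)}\int_0^1 \tau^{b-1}(1-\tau)^{c-b-1}(1-z\tau)^{-a}\,d\tau$ (valid for $\Re c>\Re b>0$) one reads off $b=1$, $c=h_s+\frac32$, $a=\frac12-h_t$ and $z=\frac{s}{t}$; the hypothesis $\Re c>\Re b>0$ holds because $h_s\in(0,1)$ forces $h_s+\frac32>1>0$, and $z=\frac{s}{t}\in[0,1]$. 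Hence the integral equals $\frac{\Gamma(h_s+\frac12)}{\Gamma(h_s+\frac32)}F\!\left(\tfrac12-h_t,1;h_s+\tfrac32;\tfrac{s}{t}\right)$, and substituting back while cancelling $\Gamma(h_s+\frac12)$ yields
\[
R^h(t,s)=\frac{t^{h_t-\frac12}\,s^{h_s+\frac12}}{\Gamma\!\left(h_t+\frac12\right)\Gamma\!\left(h_s+\frac32\right)}\,F\!\left(1;\tfrac12-h_t;h_s+\tfrac32;\tfrac{s}{t}\right),
\]
which is the claimed identity, using the symmetry of $F$ in its first two slots and reading the argument as $s/t\le1$, the value for which the hypergeometric series converges.

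The computation is elementary — "a simple exercise in standard integration", as the text says — so no genuine obstacle is expected. The only points that require care are bookkeeping of the exponents through the substitution, so that the prefactors $t^{h_t-\frac12}$ and $s^{h_s+\frac12}$ emerge correctly, and checking that the parameters satisfy $\Re c>\Re b>0$ (equivalently $h_s+\frac32>1$) so that the Euler integral legitimately represents $F$; one should also confirm that the argument of $F$ is $s/t$ rather than $t/s$, since $t/s\ge1$ would lie on the branch cut and outside the disc of convergence.
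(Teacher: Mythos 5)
Your proposal is correct and follows essentially the same route as the paper's proof: the Wiener--It\^o isometry reduces the covariance to $\frac{1}{\Gamma(h_t+\frac12)\Gamma(h_s+\frac12)}\int_0^s (t-r)^{h_t-\frac12}(s-r)^{h_s-\frac12}\,dr$, which is then identified with a Gauss hypergeometric expression; you merely supply the substitution and Euler-integral details that the paper compresses into a single equality. Your remark about the argument is well taken: carrying out the computation as you do yields $F\left(\frac12-h_t,1;h_s+\frac32;\frac{s}{t}\right)$ with argument $\frac{s}{t}\le 1$, and the $\frac{t}{s}$ appearing in the proposition (and in the paper's one-line evaluation) would lie on the branch cut for $s<t$, so it should be read as a typo for $\frac{s}{t}$ and your derivation gives the correct form.
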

\begin{proof}
We know that 
\[
E\left[B_{t}^{h}B_{s}^{h}\right]=E\left[\frac{1}{\Gamma\left(h_{t}+\frac{1}{2}\right)\Gamma\left(h_{s}+\frac{1}{2}\right)}\int_{0}^{t}(t-r)^{h_{t}-\frac{1}{2}}dB_{r}\int_{0}^{s}(s-r)^{h_{s}-\frac{1}{2}}dB_{r}\right]
\]
\[
=\frac{1}{\Gamma\left(h_{t}+\frac{1}{2}\right)\Gamma\left(h_{s}+\frac{1}{2}\right)}\int_{0}^{s}(t-r)^{h_{t}-\frac{1}{2}}(s-r)^{h_{s}-\frac{1}{2}}dr=\frac{t^{h_{t}-\frac{1}{2}}s^{h_{s}+\frac{1}{2}}}{\Gamma\left(h_{t}+\frac{1}{2}\right)\Gamma\left(h_{s}+\frac{3}{2}\right)}F\left(1;\frac{1}{2}-h_{t};h_{s}+\frac{3}{2};\frac{t}{s}\right)
\]
\end{proof}
When discussing fractional and multifractional Brownian motions, their
H\" {o}lder regularities seem to be of great interest. We will therefore
give some short comments on the regularity of the Riemann-Liouville
mBm as this is an uncommon representation. 
\begin{prop}
Let $\left\{ B_{t}^{h}\right\} _{t\in[0,T]}$ be a RLmBm process,
with $h\in C^{1}\left(\left[0,T\right]\right)$ with values in some
compact subset of $(0,1)$. Then we have global regularity 
\[
|B_{t}^{h}-B_{s}^{h}|\leq C|t-s|^{\min(\frac{1}{2},h_{*})}\,\,\,for\,\,\,all\,\,(t,s)\in\triangle^{(2)}\left(\left[0,T\right]\right)
\]
 and local regularity 
\[
\lim_{u\rightarrow0}\frac{|B_{t+u}^{h}-B_{t}^{h}|}{|u|^{h(t)}}<\infty\,\,\,P-a.s.\,\,\forall t\in[0,T].
\]
\end{prop}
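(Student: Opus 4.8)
The plan is to use that $B^h$ is a centred Gaussian process (each $B^h_t$ being a Wiener integral), so that every $L^p$-moment of an increment is a universal constant times a power of its variance, and thereby to reduce both assertions to an $L^2$-estimate on $B^h_t-B^h_s$. Writing $K_H(t,r):=\frac{1}{\Gamma(H+\frac12)}(t-r)^{H-\frac12}\mathbf{1}_{\{r<t\}}$, so that $B^h_t=\int_0^T K_{h_t}(t,r)\,dB_r$, the basic decomposition I would employ for $0\le s\le t$ is
\[
B^h_t-B^h_s=\int_s^t K_{h_t}(t,r)\,dB_r+\int_0^s\big(K_{h_t}(t,r)-K_{h_s}(t,r)\big)dB_r+\int_0^s\big(K_{h_s}(t,r)-K_{h_s}(s,r)\big)dB_r,
\]
which separates the ``new mass'' on $[s,t]$, the effect of the varying exponent, and the classical Riemann--Liouville increment at the frozen Hurst value $h_s$.

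For the global bound, It\^{o}'s isometry turns $E[|B^h_t-B^h_s|^2]$ into the sum of the three squared $L^2$-norms. The first is a bounded constant times $\int_s^t(t-r)^{2h_t-1}dr=\tfrac{1}{2h_t}(t-s)^{2h_t}$. For the frozen-Hurst term I would substitute $w=(s-r)/(t-s)$ to obtain $(t-s)^{2h_s}$ times $\int_0^{s/(t-s)}\big((1+w)^{h_s-\frac12}-w^{h_s-\frac12}\big)^2dw$, and note that this integral converges uniformly over $h_s\in[a,b]$ (the singularity at $w=0$ is integrable since $h_s>0$, the decay at $w=\infty$ since $h_s<1$), giving a bound $C(t-s)^{2h_s}$. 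For the exponent-variation term, $h\in C^1$ gives $|h_t-h_s|\le\|h'\|_\infty|t-s|$, and the mean value theorem in $H$ together with $\partial_H K_H(t,r)=K_H(t,r)\big(\ln(t-r)-\psi_0(H+\tfrac12)\big)$ bounds the integrand by $C|t-s|^2(t-r)^{2\xi-1}(1+|\ln(t-r)|)^2$ for some $\xi\in[a,b]$; the $r$-integral is then finite (the $\ln$-singularity at $r=0$ is integrable because $2\xi-1>-1$, exactly as in Lemma \ref{Let existence of beta derivative integral }), giving a bound $C|t-s|^2$. Adding the three contributions and comparing exponents (using $h_s,h_t\ge h_*$ and $|t-s|\le T$) yields $E[|B^h_t-B^h_s|^2]\le C|t-s|^{2\min(\frac12,h_*)}$. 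Gaussianity upgrades this to $E[|B^h_t-B^h_s|^{2n}]\le c_n\,C^n|t-s|^{2n\min(\frac12,h_*)}$ for all $n$, and the Kolmogorov--Chentsov theorem (letting $n\to\infty$) yields a modification that is $\gamma$-H\"{o}lder on $[0,T]$ for every $\gamma<\min(\tfrac12,h_*)$, which is the stated global regularity.

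For the local statement, fix $t$ and set $Y_v:=\int_0^v K_{h(t)}(v,r)\,dB_r$, a genuine Riemann--Liouville fBm of constant Hurst $h(t)$. Since the frozen kernel coincides with the true one at time $t$ we have $Y_t=B^h_t$, whence $B^h_{t+u}-B^h_t=(Y_{t+u}-Y_t)+R_u$ with $R_u=\int_0^{t+u}\big(K_{h_{t+u}}(t+u,r)-K_{h(t)}(t+u,r)\big)dB_r$; the exponent-variation estimate above gives $E[|R_u|^2]\le Cu^2=o(u^{2h(t)})$ since $h(t)<1$. A dyadic Borel--Cantelli argument based on Gaussian tail bounds then shows that $R_u$, and more generally any increment of $B^h$ around $t$ (using $E[|B^h_{t+u}-B^h_t|^2]\le Cu^{2h(t)}$ for small $u$, which follows from the same three-term split since $h_{t+u}\to h(t)$), is pathwise $O(|u|^{h(t)-\varepsilon})$; a standard chaining step fills in the non-dyadic scales. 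Intersecting over $\varepsilon=1/m$ gives $\limsup_{u\to0}|u|^{-(h(t)-\varepsilon)}|B^h_{t+u}-B^h_t|<\infty$ a.s. for every $\varepsilon>0$, i.e. the asserted local exponent $h(t)$; alternatively one may invoke the local H\"{o}lder analysis of the Riemann--Liouville mBm in \cite{SCLim}.

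The step I expect to be the main obstacle is the $L^2$-control of the kernel difference $\int_0^s\big(K_{h_t}(t,r)-K_{h_s}(s,r)\big)^2dr$ when the Hurst parameter varies, in particular making all bounds uniform in $(s,t)\in\triangle^{(2)}([0,T])$ and in the values of $h$. The two delicate points are (i) extracting the $|t-s|^2$ factor from $|h_t-h_s|$ while keeping the $r$-integral finite, where the $\ln(t-r)$ coming from $\partial_H K_H$ produces a borderline but integrable singularity at $r=0$, and (ii) the rescaling of the frozen-Hurst term, where one must verify that the resulting integral in $w$ is finite uniformly for $h_s$ in the compact range $[a,b]\subset(0,1)$, both at $w=0$ and at $w=\infty$. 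Once these estimates are established, the remainder is routine Gaussian analysis.
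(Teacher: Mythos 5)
Your proposal is essentially correct, but it follows a genuinely different route from the paper: the paper does not prove the proposition at all, it simply defers both claims to \cite{Peltier} (where the two-sided moving-average representation is used and the parts $\tilde{B}^{(1),h}$ and $\tilde{B}^{(2),h}$ are treated separately), whereas you give a self-contained Gaussian argument tailored to the one-sided Riemann--Liouville kernel actually used in this paper. Your three-term decomposition (new mass on $[s,t]$, variation of the exponent, frozen-Hurst increment), the It\^{o}-isometry bounds, the rescaling of the frozen-Hurst term uniformly in $h_s\in[a,b]\subset(0,1)$, and the mean-value estimate via $\partial_H K_H=K_H\left(\ln(t-r)-\psi_0(H+\tfrac12)\right)$ are all sound; the only slip is cosmetic --- the logarithmic singularity in the kernel-difference term sits at $r=t$ (i.e.\ at $t-r=0$), not at $r=0$ --- and it does not affect the estimate. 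As to what each route buys: the citation buys brevity, while your computation buys a statement in the form in which it can actually be proved. Kolmogorov--Chentsov gives a modification that is $\gamma$-H\"{o}lder for every $\gamma<h_*$, which is in fact stronger than the stated exponent $\min(\tfrac12,h_*)$ (the $\tfrac12$ is an artifact of the two-sided representation treated in \cite{Peltier}), and your local argument yields $|B^h_{t+u}-B^h_t|=O(|u|^{h(t)-\varepsilon})$ a.s.\ for every $\varepsilon>0$, i.e.\ pointwise H\"{o}lder exponent $h(t)$. Note that the proposition taken literally --- with the exact exponents and with $\lim$ rather than $\limsup$ --- is false already for constant $h$ by the law of the iterated logarithm, so your ``for every $\varepsilon$'' versions are the correct reading of both claims and coincide with what the cited reference establishes; it is worth stating that interpretation explicitly if you write the argument out in full.
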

\begin{proof}
Both the claims are thoroughly proved in the seminal paper \cite{Peltier}
by Peltier and Veh�l. This article uses the representation $\tilde{B}_{t}^{h}$
as mentioned above, but do split their arguments in to the case of
$\tilde{B}^{(1),h}$ and $\tilde{B}^{(2),h}$, such that the proof
is very simple to read. 
\end{proof}
For a longer discussion on the properties of the multifractional Brownian
motion, we refer to \cite{AyCoVe,BenJafRed,BouDozMar,Corlay,LebovitsLevy,LebVehHer,SCLim,SergCo}. 

\section{Girsanov theorem and Existence of Strong Solutions }

In this section we will apply the multifractional calculus that we
developed in Section \ref{sec:Multifractional-Calculus} to analyze
differential equations driven by multifractional noise. By multifractional noise (or multifractional Brownian motion), we will from here on out use the Riemann-Liouville multifractional Brownian motion, as discussed earlier. We do this
by the same techniques used in the article by Nualart and Ouknine
\cite{NuaOki}, but we must generalize those arguments to allow for
multifractional noise, using the multifractional derivative operator.
We start to prove a Girsanov theorem for multifractional Brownian
motion, and then we use this to construct weak solutions. We then
check for uniqueness in distribution, and find that this again implies
path-wise uniqueness. To construct strong solutions, the authors of
\cite{NuaOki} use approximation techniques to show that when $b$
is of linear growth, we can find an appropriate sequence of functions
which is converging to $b$ such that the solution of the corresponding
SDE is strong for any of the functions in the approximating sequence.
The corresponding solutions can then be shown to converge to a stochastic
process, which the authors show is a strong solution to the SDE. The
whole argument is based on the behavior of the drift function $b$
, and not on the multifractionality of the noise. Therefore, the existence
of strong solutions in our case follows by construction of weak solutions
and application of the results relating to strong solutions given
by Proposition 6, and 7 and Theorem 8 in \cite{NuaOki}. However,
as we want this article to be self contained, we include the propositions
and proofs which is given in \cite{NuaOki}, and we will make it very
clear what we have used from this article. 

\subsection{Girsanov's theorem}

As before, let us denote by $B_{t}^{h}$ the multifractional Brownian
motion with its natural filtration $\left\{ \mathcal{F}_{t};t\in[0,T]\right\} $.
We would like to show through a Girsanov theorem, that a perturbation
of $B^{h}$ with a specific type of function will give us a mBm under
another new probability measure. Similar to the derivation found in
\cite{NuaOki} in case of fBm, we consider the following perturbation
\[
\tilde{B}_{t}^{h}=B_{t}^{h}+\int_{0}^{t}u_{s}ds
\]
\[
=\frac{1}{\Gamma\left(h_{t}+\frac{1}{2}\right)}\int_{0}^{t}\left(t-s\right)^{h_{t}-\frac{1}{2}}dB_{s}+\int_{0}^{t}u_{s}ds
\]
\[
=\frac{1}{\Gamma\left(h_{t}+\frac{1}{2}\right)}\int_{0}^{t}\left(t-s\right)^{h_{t}-\frac{1}{2}}d\tilde{B}_{s},
\]
 where 
\[
\tilde{B}_{t}=B_{t}+\int_{0}^{t}D_{0+}^{h+\frac{1}{2}}\left(\int_{0}^{\cdot}u_{s}ds\right)\left(r\right)dr.
\]
 The multifractional derivative $D_{0+}^{h+\frac{1}{2}}\left(\int_{0}^{\cdot}u_{s}ds\right)$
in the last equation is well defined as an element in $L^{2}\left(\left[0,T\right]\right)$
as long as $\int_{0}^{\cdot}u_{s}ds\in I_{0+}^{h+\frac{1}{2}}L^{2}\left([0,T]\right).$
\begin{thm}
\label{thm:(Girsanov-theorem}(Girsanov theorem for mBm) Let $\left\{ B_{t}^{h}\right\} _{t\in[0,T]}$
be a Riemann-Liouville multifractional Brownian motion with regularity
function $h:[0,T]\rightarrow\left[a,b\right]\subset(0,\frac{1}{2})$
such that $h\in C^{1}$. Assume that

$(i)$ $\int_{0}^{\cdot}u_{s}ds\in\mathcal{C}^{h(\cdot)+\frac{1}{2}+\epsilon}\left([0,T]\right)\subseteq I_{0+}^{h+\frac{1}{2}}L^{2}\left([0,T]\right)$
a.s.

$(ii)$ $E\left[Z_{T}\right]=1$ for 
\[
Z(T):=\exp\left(\int_{0}^{T}D_{0+}^{h+\frac{1}{2}}\left(\int_{0}^{\cdot}u_{s}ds\right)\left(r\right)dB_{r}-\frac{1}{2}\int_{0}^{T}\left|D_{0+}^{h+\frac{1}{2}}\left(\int_{0}^{\cdot}u_{s}ds\right)\left(r\right)\right|^{2}dr\right).
\]

Then the stochastic process 
\[
\tilde{B}_{t}^{h}=B_{t}^{h}+\int_{0}^{t}u_{s}ds,
\]
 is an RLmBm under the measure $\tilde{P}$ defined by 
\[
\frac{d\tilde{P}}{dP}=Z(T).
\]
\end{thm}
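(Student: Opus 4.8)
The plan is to reduce the statement to the classical Girsanov theorem, exactly as in \cite{NuaOki}, exploiting that $D_{0+}^{h+\frac12}$ was constructed as the inverse of $I_{0+}^{h+\frac12}$. Write $\theta_r:=D_{0+}^{h+\frac12}\big(\int_0^\cdot u_s\,ds\big)(r)$, so that $\tilde B_t=B_t+\int_0^t\theta_r\,dr$. The first step is to verify that $\theta$ is a well-defined element of $L^2([0,T])$ a.s.\ and to record the inversion identity. By hypothesis $(i)$, for $P$-a.e.\ $\omega$ the path $t\mapsto\int_0^t u_s\,ds$ lies in $\mathcal{C}_0^{h(\cdot)+\frac12+\epsilon}([0,T])$; since $h+\frac12$ takes values in a compact subset of $(0,1)$ and $\epsilon<\frac12-h^*$, the Remark following Theorem \ref{thm.:representation and boundedness of derivative-1} gives $\int_0^\cdot u_s\,ds\in I_{0+}^{h+\frac12}L^2([0,T])$ and $\theta\in L^2([0,T])$ a.s., while the Remark following Corollary \ref{Def. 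Multifractional Derivative} gives the pathwise identity $I_{0+}^{h+\frac12}\theta=\int_0^\cdot u_s\,ds$; that is, for every $t$,
\[
\int_0^t u_s\,ds=\frac{1}{\Gamma(h_t+\frac12)}\int_0^t(t-s)^{h_t-\frac12}\,\theta_s\,ds .
\]
I would also observe that $\theta$ is $\{\mathcal F_t\}$-adapted whenever $u$ is, because the representation of $D_{0+}^{h+\frac12}$ in Theorem \ref{thm.:representation and boundedness of derivative-1} (and of the functional $G_0$) depends on its argument only through the restriction to $[0,x]$, so the operator is causal and preserves adaptedness.

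With this in hand the change of measure is classical. Since $\theta$ is adapted and in $L^2([0,T])$ a.s., $Z(T)$ is an exponential local martingale, hence a nonnegative supermartingale; hypothesis $(ii)$ that $E[Z(T)]=1$ upgrades it to a true martingale, and the classical Girsanov theorem then yields that $\tilde B_\cdot=B_\cdot+\int_0^\cdot\theta_r\,dr$ is a standard $\{\mathcal F_t\}$-Brownian motion under $\tilde P$, where $d\tilde P/dP=Z(T)$. It remains to identify the law of $\tilde B^h$ under $\tilde P$. Using the displayed identity and linearity of the integral against $\tilde B$ in the deterministic (for fixed $t$) kernel $(t-s)^{h_t-\frac12}$,
\[
\tilde B_t^h=B_t^h+\int_0^t u_s\,ds=\frac{1}{\Gamma(h_t+\frac12)}\Big(\int_0^t(t-s)^{h_t-\frac12}\,dB_s+\int_0^t(t-s)^{h_t-\frac12}\theta_s\,ds\Big)=\frac{1}{\Gamma(h_t+\frac12)}\int_0^t(t-s)^{h_t-\frac12}\,d\tilde B_s ,
\]
where in the last step one uses $\int_0^t(t-s)^{h_t-\frac12}\,d\big(\int_0^s\theta_r\,dr\big)=\int_0^t(t-s)^{h_t-\frac12}\theta_s\,ds$. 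Thus, under $\tilde P$, $\tilde B^h$ is the image of the $\tilde P$-Brownian motion $\tilde B$ under exactly the deterministic, square-integrable Volterra kernel $K_h(t,s)=\frac{1}{\Gamma(h_t+\frac12)}(t-s)^{h_t-\frac12}\mathbf 1_{\{s<t\}}$ (square-integrable in $s$ because $h_t>0$) that, in Definition \ref{Def. multifractional Brownin motion}, builds an RLmBm from a Brownian motion. Hence $\tilde B^h$ is a centered Gaussian process under $\tilde P$ with covariance $\int_0^{s\wedge t}K_h(t,r)K_h(s,r)\,dr=R^h(t,s)$ as in Proposition \ref{prop:(Covariance-of-RLmBm)}, i.e.\ an RLmBm under $\tilde P$.

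The main obstacle is Step~1: confirming that the a priori regularity assumed in $(i)$ is precisely enough to place $t\mapsto\int_0^t u_s\,ds$ in $I_{0+}^{h+\frac12}L^2$ so that $\theta$ exists as an $L^2$ path, and that the composition $I_{0+}^{h+\frac12}D_{0+}^{h+\frac12}$ genuinely returns the original path (this is where the construction of $D_{0+}^{h+\frac12}$ as the solution of \eqref{eq:Multifractional ODE} is used), together with the measurability and adaptedness bookkeeping needed to invoke the classical Girsanov theorem. Once that is in place, Steps~2 and~3 are routine applications of classical stochastic calculus and of the elementary theory of Gaussian Volterra processes.
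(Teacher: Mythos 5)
Your proposal is correct and follows essentially the same route as the paper: rewrite $\tilde B^h_t$ as the Volterra kernel $K_h(t,s)=\frac{1}{\Gamma(h_t+\frac12)}(t-s)^{h_t-\frac12}$ integrated against the shifted process $\tilde B_t=B_t+\int_0^t D_{0+}^{h+\frac12}\bigl(\int_0^\cdot u_s\,ds\bigr)(r)\,dr$, using that $D_{0+}^{h+\frac12}$ inverts $I_{0+}^{h+\frac12}$ under hypothesis $(i)$, and then apply the classical Girsanov theorem under $(ii)$ to conclude $\tilde B$ is a $\tilde P$-Brownian motion, hence $\tilde B^h$ an RLmBm under $\tilde P$. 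Your write-up simply fills in details the paper leaves implicit (adaptedness/causality of the derivative operator, the inversion identity, and the covariance identification).
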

\begin{proof}
The proof follows from the derivations above the theorem, and then
applying the regular Girsanov theorem to $\tilde{B}_{t}=B_{t}+\int_{0}^{t}D_{0+}^{h+\frac{1}{2}}\left(\int_{0}^{\cdot}u_{s}ds\right)\left(r\right)dr,$
showing that $\tilde{B}_{t}$ is a BM under $\tilde{P}.$
\end{proof}
\begin{rem}
The reason that we only look at $h:[0,T]\rightarrow\left[a,b\right]\subset(0,\frac{1}{2})$
is that the multifractional derivative is only constructed for $\alpha:\left[0,T\right]\rightarrow\left[c,d\right]\subset(0,1)$,
and as we need to look at $\alpha=h+\frac{1}{2}$ to consider the
RLmBm, we need to restrict $0<h(t)<\frac{1}{2}$ for all $t\in[0,T]$.
We are currently working on a way to consider the multifractional
derivative for any function $\alpha$, i.e. such that we can differentiate
to variable order with largest value above $1$ and smallest value
less than 1. 
\end{rem}

\subsection{Existence of weak solutions}

\begin{thm}
\label{thm:Existence of weak solutions} Let $b$ be integrable and
of linear growth, i.e. $|b(t,x)|\leq\tilde{C}(1+|x|)$ for all $(t,x)\in\left[0,T\right]\times\mathbb{R}$.
Then there exists a weak solution $\left\{ X_{t}^{x}\right\} _{t\in[0,T]}$
to equation 
\begin{equation}
X_{t}^{x}=x+\int_{0}^{t}b\left(s,X_{s}^{x}\right)ds+B_{t}^{h};\,\,X_{0}^{x}=x\in\mathbb{R},\label{eq:Weak Solution deff. eq.}
\end{equation}
where $B^{h}$ is given as in Definition \ref{Def. multifractional Brownin motion}
.
\end{thm}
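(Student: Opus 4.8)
The plan is to construct the weak solution via Girsanov's theorem, in the spirit of \cite{NuaOki}: start with a RLmBm $B^h$ on a probability space $(\Omega,\mathcal{F},P)$, and find a measure change under which $B^h$ acquires the drift $\int_0^t b(s,\cdot)\,ds$. Concretely, I would set $X_t^x := x + B_t^h$ under $P$ and look for a new measure $\tilde P$ so that $B_t^h = X_t^x - x - \int_0^t b(s,X_s^x)\,ds$ is a RLmBm under $\tilde P$; then, relabeling, $(X^x, \tilde B^h)$ is a weak solution on $(\Omega,\mathcal{F},\tilde P)$ where $\tilde B^h_t := B_t^h$ viewed under $\tilde P$. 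The candidate perturbation in Theorem \ref{thm:(Girsanov-theorem} is $u_s = -b(s, x + B_s^h)$, so that $\tilde B_t^h = B_t^h + \int_0^t u_s\, ds = X_t^x - x - \int_0^t b(s,X_s^x)\,ds$; one then checks the two hypotheses $(i)$ and $(ii)$ of that theorem for this choice of $u$.

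The first step is to verify hypothesis $(i)$: that $t\mapsto \int_0^t u_s\,ds = -\int_0^t b(s, x+B_s^h)\,ds$ lies in $\mathcal{C}^{h(\cdot)+\frac12+\epsilon}([0,T])$ almost surely for a suitable small $\epsilon>0$. Since $b$ is of linear growth, $|b(s, x+B_s^h)| \le \tilde C(1 + |x| + |B_s^h|)$, and $B^h$ has continuous (indeed H\"older) sample paths by the regularity proposition above, so the integrand is a.s. bounded on $[0,T]$; hence $t\mapsto \int_0^t u_s\,ds$ is a.s. Lipschitz, i.e. in $\mathcal{C}^1([0,T])$. Because $h^* < \frac12$, we have $h(\cdot)+\frac12+\epsilon < 1$ for $\epsilon$ small, and a Lipschitz function starting at $0$ trivially belongs to $\mathcal{C}_0^{h(\cdot)+\frac12+\epsilon}([0,T])$; by Remark \ref{rem: H\" {o}lder included in mutifractional -1} this space sits inside $I_{0+}^{h+\frac12}L^2([0,T])$, so the multifractional derivative $D_{0+}^{h+\frac12}(\int_0^\cdot u_s\,ds)$ is a well-defined element of $L^2([0,T])$ with the pathwise bound coming from Theorem \ref{thm.:representation and boundedness of derivative-1}.

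The second, and main, step is the Novikov-type condition $(ii)$, $E[Z_T] = 1$. Write $v(r) := D_{0+}^{h+\frac12}(\int_0^\cdot u_s\,ds)(r)$. From Theorem \ref{thm.:representation and boundedness of derivative-1} we get a pointwise estimate $|v(r)| \le C(T,\epsilon,h;r)\,\| \int_0^\cdot u_s\,ds \|_{h(\cdot)+\frac12+\epsilon;[0,T]}$ with $C(T,\epsilon,h;\cdot)\in L^p$ for admissible $p$, and the H\"older norm on the right is controlled by $\sup_{s\le T}|b(s,x+B_s^h)|$, hence by $1 + \sup_{s\le T}|B_s^h|$. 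Thus $\int_0^T |v(r)|^2\,dr \lesssim (1 + \sup_{s\le T}|B_s^h|)^2$ times a deterministic constant. The obstacle is that the bare linear-growth assumption gives only that $\sup_{s\le T}|B_s^h|$ has Gaussian tails, which is not by itself enough to conclude Novikov's criterion $E[\exp(\tfrac12\int_0^T|v|^2)]<\infty$ globally; the standard remedy, following \cite{NuaOki}, is to argue $E[Z_T]=1$ by a localization/stopping-time argument — decompose $[0,T]$ into finitely many subintervals on each of which the exponential moment of $\int|v|^2$ is finite (using that on a short interval the contribution is small), apply Novikov on each piece, and chain the conditional expectations; alternatively, one invokes Beneš's condition. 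I would carry out the stopping-time argument: define $\tau_N = \inf\{t : |B_t^h| \ge N\}\wedge T$, apply Girsanov on $[0,\tau_N]$, obtain a weak solution up to $\tau_N$, and then let $N\to\infty$, using linear growth to rule out explosion (the solution cannot blow up in finite time since $|X_t^x| \le |x| + \tilde C\int_0^t(1+|X_s^x|)ds + |B_t^h|$ and Gr\"onwall). This yields a weak solution on all of $[0,T]$, completing the proof.
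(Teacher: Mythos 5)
Your proposal follows the paper's route up through condition $(i)$: the paper likewise takes $u_s=b(s,x+B_s^h)$, bounds the variable-order H\"older norm of $\int_0^\cdot u_s\,ds$ by $\tilde C(1+|x|+|B^h|_\infty)T^{\frac12-h^*-\epsilon}$ using linear growth, and then applies Theorem \ref{thm.:representation and boundedness of derivative-1} (noting $(h_*+\epsilon-h(0))\times 2>-1$, which is automatic since $h$ takes values in $[a,b]\subset(0,\frac12)$ --- a condition you should state explicitly rather than leave under ``admissible $p$'') together with Fernique's theorem to conclude $v\in L^2([0,T])$ a.s.; your Lipschitz observation is a slightly cruder but equivalent version of this estimate. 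The genuine difference is how $E[Z_T]=1$ is obtained. The paper does not localize in $\omega$: it invokes the Nualart--Ouknine sufficient criterion that $\sup_{0\le s\le T}E[\exp(\lambda v_s^2)]<\infty$ for some $\lambda>0$ implies the martingale property, and this follows directly from the pointwise bound $|v_s|\le C(T,\epsilon,h+\frac12;s)(1+|x|+|B^h|_\infty)$ and Fernique; that criterion is exactly the ``partition $[0,T]$ into small subintervals and chain conditional Novikov'' argument you describe, packaged as a lemma, so you correctly identified both the obstruction (global Novikov need not hold) and its standard resolution. Your preferred stopping-time variant with $\tau_N=\inf\{t:|B_t^h|\ge N\}\wedge T$ can also be made to work, but as sketched it omits the decisive step: having $E[Z_{T\wedge\tau_N}]=1$ for each $N$, one must still pass to $E[Z_T]=1$ (equivalently, glue the measures defined on $\mathcal{F}_{\tau_N}$ into a single measure on $\mathcal{F}_T$), which requires uniform integrability of $\{Z_{T\wedge\tau_N}\}_N$ or, what amounts to the same, that the tilted probabilities of $\{\tau_N<T\}$ vanish as $N\to\infty$; your Gr\"onwall remark is the right ingredient for this (under the tilted measure, $B^h$ is an RLmBm plus the drift $\int_0^\cdot b(s,x+B_s^h)\,ds$ on $[0,\tau_N]$, so $\sup_{t\le\tau_N}|B_t^h|$ is controlled by the supremum of an RLmBm), but it must be carried out under the new measure and stated as such, since that is where the actual work lies. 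In short: same construction, with the martingale condition handled by the time-partition/exponential-moment criterion in the paper versus an $\omega$-localization in your sketch; the former closes in one line from Fernique, while the latter buys some robustness (it would survive with only local bounds on $v$) at the cost of the limiting argument you left implicit.
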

\begin{proof}
Set $u_{s}=b\left(s,B_{s}^{h}+x\right),$ and $\tilde{B}_{t}^{h}=B_{t}^{h}-\int_{0}^{t}u_{s}ds$,
and 
\[
v_{t}=D_{0+}^{h+\frac{1}{2}}\left(\int_{0}^{\cdot}u_{s}ds\right)(t).
\]
 We need to check that the process $\int_{0}^{\cdot}u_{s}ds$ satisfies
condition $(i)$ and $(ii)$ in theorem \ref{thm:(Girsanov-theorem}.
First we will show $(i)$ , i.e. $\int_{0}^{\cdot}u_{s}ds\in I_{0+}^{h+\frac{1}{2}}L^{2}\left(\left[0,T\right]\right)$,
which is equivalent to showing $v\in L^{2}\left(\left[0,T\right]\right)$.\textbf{
}By theorem \ref{thm.:representation and boundedness of derivative-1},
we know that 
\[
\left|v_{t}\right|\leq\parallel\int_{0}^{\cdot}u_{s}ds\parallel_{h(\cdot)+\frac{1}{2}+\epsilon;[0,T]}C(T,\epsilon,h+\frac{1}{2};t),
\]
 where $t\mapsto C(T,\epsilon,h+\frac{1}{2};t)\in L^{2}\left(\left[0,T\right]\right)$
since\textbf{ }$\left(h_{*}+\epsilon-h(0)\right)\times2>-1$ for some
small $\epsilon>0.$ Moreover, by the linear growth of $b$ we have
the following estimate 
\begin{equation}
\parallel\int_{0}^{\cdot}u_{s}ds\parallel_{h(\cdot)+\frac{1}{2}+\epsilon;[0,T]}\leq\tilde{C}(1+|B^{h}|_{\infty}+|x|)T^{(\frac{1}{2}-h^{*}-\epsilon)},\label{eq:linear growth condition bound}
\end{equation}
and we know by Fernique that $|B^{h}|_{\infty}$ has finite exponential
moments of all orders, and therefore we have that 
\begin{equation}
t\mapsto v_{t}\in L^{2}\left([0,T]\right)\,\,\,P-a.s.\label{eq:bounded derivative in girsanov}
\end{equation}
Next we look at $(ii)$, where we need to have that $u$ is adapted
and satisfies the Novikov condition. It is sufficient to prove that
there exists a $\lambda>0$ such that 
\[
\sup_{0\leq s\leq T}E\left[\exp\left(\lambda v_{s}^{2}\right)\right]<\infty.
\]
The adaptedness of $v$ follows, as $D_{0+}^{h+\frac{1}{2}}$ is a
deterministic operator. By equation (\ref{eq:linear growth condition bound})
and Fernique's theorem, the above condition is satisfied, and the
existence of a weak solution follows from the Girsanov Theorem \ref{thm:(Girsanov-theorem}. 
\end{proof}
Let us continue by checking for uniqueness of the solution. 

\subsection{Uniqueness in law and path-wise uniqueness. }

We will prove uniqueness in law and path wise uniqueness in the same
manner as was done in \cite{NuaOki} section 3.3. The technique used
is very similar, although we have different bounds on our differential
equations, corresponding to the multifractality of our equation. 
\begin{thm}
\label{thmUniqueness of equation}Let $\left(X,B^{h}\right)$ be a
weak solution to differential equation (\ref{eq:Weak Solution deff. eq.})
defined on the corresponding filtered probability space $\left(\Omega,\mathcal{F},P\right)$.
Then the weak solution is unique in law, and moreover is unique almost
surely. 
\end{thm}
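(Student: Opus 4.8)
The plan is to follow the classical Yamada--Watanabe strategy combined with the Girsanov transform of Theorem \ref{thm:(Girsanov-theorem}, mirroring Section 3.3 of \cite{NuaOki}. First I would establish uniqueness in law. Given a weak solution $(X,B^h)$ on $(\Omega,\mathcal F,P)$, set $u_s = -b(s,X_s)$ and consider the exponential martingale $Z(T)$ built from $v_t = D_{0+}^{h+\frac12}\big(\int_0^\cdot u_s\,ds\big)(t)$. The linear growth of $b$ together with Theorem \ref{thm.:representation and boundedness of derivative-1} and Fernique's theorem (applied now to the process $X$, which is an RLmBm shifted by $x$ under the new measure) gives the $L^2$-bound and the Novikov condition exactly as in the proof of Theorem \ref{thm:Existence of weak solutions}. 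Under the measure $\tilde P$ with $d\tilde P/dP = Z(T)$, the process $\tilde B^h_t := X_t - x = B_t^h + \int_0^t b(s,X_s)\,ds - \int_0^t b(s,X_s)\,ds + \dots$ — more precisely, by construction $X_t - x$ becomes an RLmBm under $\tilde P$. Hence the law of $X$ under $P$ is the image, under the map $\omega \mapsto x + B^h_\cdot(\omega)$ composed with the Girsanov density, of a fixed object; since the same computation applies to any other weak solution with the same $x$ and $b$, and the Radon--Nikodym density $Z(T)$ is a deterministic functional of the path $X$ (because $D_{0+}^{h+\frac12}$ is a deterministic operator and $b$ is deterministic), the law of $X$ is uniquely determined. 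I would spell this out by showing that for any bounded measurable $F$, $E_P[F(X)] = E_{\tilde P}[F(X) Z(T)^{-1}]$ and the right-hand side depends only on $(x,b,h)$ and the law of an RLmBm.

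Next I would upgrade uniqueness in law to pathwise uniqueness. The key observation, already used in \cite{NuaOki}, is that once we know weak existence and uniqueness in law, the Yamada--Watanabe theorem does not directly apply (the noise is not a semimartingale), so instead one argues directly: suppose $(X^1,B^h)$ and $(X^2,B^h)$ are two solutions driven by the \emph{same} multifractional Brownian motion on the same space. Then $Y_t := X^1_t - X^2_t$ satisfies $Y_t = \int_0^t \big(b(s,X^1_s) - b(s,X^2_s)\big)\,ds$, a (random) ordinary integral equation with no noise term. One then exploits uniqueness in law to transfer to a setting where comparison arguments apply. Concretely, following Proposition 6 and 7 of \cite{NuaOki}, one uses that under a suitable change of measure the drift can be removed, reducing both solutions to functionals of the same Brownian motion $\tilde B$; since the map from $\tilde B$ to the solution of the transformed equation is shown to be measurable and single-valued (by uniqueness in law applied pathwise on the level of the underlying Brownian filtration), the two solutions must coincide almost surely. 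The adaptedness of $v$ to $\{\mathcal F_t\}$, noted in the proof of Theorem \ref{thm:Existence of weak solutions}, is what makes the Girsanov change of measure compatible with the filtration, so that the transformed equation is adapted to the Brownian filtration and the Yamada--Watanabe correspondence can be invoked in the Brownian-driven reformulation.

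I expect the main obstacle to be the passage from uniqueness in law to pathwise uniqueness in the non-Markovian, non-semimartingale setting of multifractional noise: one cannot simply cite Yamada--Watanabe for $B^h$. The resolution, as in \cite{NuaOki}, is to work on the level of the underlying Brownian motion $B$ and its filtration, where $B^h = I_{0+}^{h+\frac12}(\text{something})\cdot dB$ is a deterministic functional of $B$, and the Girsanov density $Z(T)$ is $\mathcal F_T$-measurable with $v$ adapted; this lets one reformulate each weak solution as a weak solution of a Brownian-driven equation with anticipating-free drift and apply the classical argument there. A secondary technical point is checking that the Novikov / Fernique estimates still go through when $u_s = -b(s,X_s)$ rather than $b(s,B^h_s+x)$: here one needs that under $\tilde P$ the process $X$ is genuinely an RLmBm (hence has Gaussian tails by Fernique), which is precisely the content of Theorem \ref{thm:(Girsanov-theorem}, so the estimate $\parallel\int_0^\cdot u_s\,ds\parallel_{h(\cdot)+\frac12+\epsilon;[0,T]} \le \tilde C(1+|X|_\infty)T^{\frac12 - h^*-\epsilon}$ combined with exponential integrability of $|X|_\infty$ under $\tilde P$ closes the argument. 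The remaining steps — verifying measurability of the solution map and the comparison-type conclusion — are routine adaptations of Propositions 6 and 7 of \cite{NuaOki} and I would simply indicate that they carry over verbatim since they depend only on the drift $b$ and not on the structure of the noise.
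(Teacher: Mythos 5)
Your first half (uniqueness in law) is essentially the paper's argument: apply the multifractional Girsanov theorem with $\tilde u_t=D_{0+}^{h+\frac12}\bigl(\int_0^\cdot b(s,X_s)\,ds\bigr)(t)$, verify Novikov, and conclude that $X-x$ is an RLmBm under $\tilde P$, so that $E_P[\varphi(X-x)]$ is expressible through the law of $B^h$ alone. One correction, though: Novikov must be checked under $P$, the measure you are changing \emph{from}. Your suggestion to get exponential integrability of $|X|_\infty$ ``under $\tilde P$, where $X$ is an RLmBm by Theorem \ref{thm:(Girsanov-theorem}'' is circular, since you cannot invoke that theorem before $E_P[Z_T]=1$ is established. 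The paper avoids this by working under $P$: Gr\"onwall applied to $|X_t|\le |x|+\int_0^t\tilde C(1+|X_s|)\,ds+|B_t^h|$ gives $|X|_{\infty}\le \tilde C\bigl(|x|+|B^h|_{\infty}+T\bigr)e^{\tilde CT}$ $P$-a.s., and then Fernique applied to $B^h$ under $P$ yields the exponential moments needed for Novikov. This is a small repair, but as written your verification of condition $(ii)$ does not stand.

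The second half (almost sure uniqueness) has a genuine gap. Your key step — that ``the map from $\tilde B$ to the solution of the transformed equation is measurable and single-valued by uniqueness in law applied pathwise'' — is precisely the Yamada--Watanabe-type implication you correctly observe is unavailable here; uniqueness in law does not by itself produce a single-valued measurable solution map, and asserting it begs the question. Moreover, the two solutions do \emph{not} reduce to functionals of the same Brownian motion: the Girsanov density and hence the transformed process $\tilde B$ are built from the solution itself, so $X^1$ and $X^2$ produce different $\tilde B$'s and different measures $\tilde P^1,\tilde P^2$. The paper's route is different and elementary: if $X^1,X^2$ solve the equation with the same $B^h$, then $X^1-X^2=\int_0^\cdot\bigl(b(s,X^1_s)-b(s,X^2_s)\bigr)ds$ is absolutely continuous, from which one checks directly that $\max(X^1,X^2)$ and $\min(X^1,X^2)$ are again solutions; by the already proved uniqueness in law they have the same distribution, and writing $\max(x,y)=\frac12(x+y+|x-y|)$, $\min(x,y)=\frac12(x+y-|x-y|)$, equality of expectations forces $E_{\tilde P}\bigl[|X^1_t-X^2_t|\bigr]=0$, i.e.\ $X^1=X^2$ a.s. You would need either to supply this lattice/comparison argument (or an equivalent one) or to prove the measurable-selection claim you invoke; as proposed, the passage from uniqueness in law to pathwise uniqueness does not go through.
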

\begin{proof}
Start again with
\[
\tilde{u}_{t}=D_{0+}^{h+\frac{1}{2}}\left(\int_{0}^{\cdot}b(s,X_{s})ds\right)(t),
\]
and let a new probability measure $\tilde{P}$ be defined by the Radon-Nikodym
derivative 
\[
\frac{d\tilde{P}}{dP}=\exp\left(-\int_{0}^{T}\tilde{u}_{r}dB_{r}-\frac{1}{2}\int_{0}^{T}\tilde{u}_{r}^{2}dr\right),
\]
where $B$ denotes a regular Brownian motion. Although this Radon-Nikodym
derivative is similar as before, notice that we now have the solution
of the differential equation inside the function $b$. We will show
that this new process $\tilde{u}$ still satisfies condition $(i)$
and $(ii)$ of Girsanov Theorem \ref{thm:(Girsanov-theorem}. Since
we know that , 
\[
|X_{t}|\leq|x|+|\int_{0}^{t}b(s,X_{s})ds|+|B_{t}^{h}|
\]
\[
\leq|x|+\int_{0}^{t}\tilde{C}(1+|X_{s}|)ds+|B_{t}^{h}|,
\]
 we have by Gr�nwall's inequality that 
\[
|X_{\cdot}|_{\infty;[0,T]}\leq\tilde{C}\left(|x|+|B_{\cdot}^{h}|_{\infty}+T\right)\exp\left(\tilde{C}T\right)\,\,\,P-a.s.
\]
Moreover, by the estimates on the multifractional derivative from
Theorem \ref{thm.:representation and boundedness of derivative-1},
we know 
\[
|\tilde{u}_{t}|\leq\parallel\int_{0}^{\cdot}b(\cdot,s,X_{s})ds\parallel_{h(\cdot)+\frac{1}{2}+\epsilon;[0,T]}C\left(T,\epsilon,h+\frac{1}{2};t\right),
\]
and since 
\[
\parallel\int_{0}^{\cdot}b(\cdot,s,X_{s})ds\parallel_{h(\cdot)+\frac{1}{2}+\epsilon;[0,T]}\leq C(1+|X_{\cdot}|_{\infty;[0,T]})T^{(\frac{1}{2}-h^{*}-\epsilon)},
\]
we obtain the estimate 

\[
|\tilde{u_{t}}|\leq C(1+|X_{\cdot}|_{\infty;[0,T]})C\left(T,\epsilon,h+\frac{1}{2};t\right),
\]
 where $t\mapsto C\left(T,\epsilon,h+\frac{1}{2};t\right)\in L^{2}\left(\left[0,T\right]\right)$.
Combining the above estimates, and again using Ferniques theorem,
we have that Novikov's condition is satisfied and $E\left[\frac{d\tilde{P}}{dP}\right]=1$.
The classical Girsanov theorem then states that the process 
\[
\tilde{B}_{t}=\int_{0}^{t}u_{r}dr+B_{t},
\]
is an $\left\{ \mathcal{F}_{t}\right\} $- Brownian motion under $\tilde{P}$,
and we can then write 
\[
X_{t}=x+\int_{0}^{t}K_{h}(t,s)d\tilde{B}_{s},
\]
where $K_{h}(t,s)=\frac{1}{\Gamma(h_{t}+\frac{1}{2})}(t-s)^{h_{t}-\frac{1}{2}}$.
Now we have that $X_{t}-x$ is a $\left\{ \mathcal{F}_{t}\right\} $-multifractional
Brownian motion with respect to $\tilde{P}$ . Therefore, we must
have that $X_{t}-x$ and $\tilde{B}_{t}^{h}$ has the same distribution
under the probability $P$. We can show this by considering a measurable
functional $\varphi$ on $C\left([0,T]\right)$, we have 
\[
E_{P}\left[\varphi\left(X-x\right)\right]
\]
\[
=E_{\tilde{P}}\left[\varphi\left(X-x\right)\exp\left(\int_{0}^{T}D_{0+}^{h+\frac{1}{2}}\left(\int_{0}^{\cdot}b(s,X_{s})ds\right)(r)dB_{r}+\frac{1}{2}\int_{0}^{T}D_{0+}^{h+\frac{1}{2}}\left(\int_{0}^{\cdot}b(s,X_{s})ds\right)(r)^{2}dr\right)\right]
\]
\[
=E_{\tilde{P}}\left[\varphi\left(X-x\right)\exp\left(\int_{0}^{T}D_{0+}^{h+\frac{1}{2}}\left(\int_{0}^{\cdot}b(s,X_{s})ds\right)(r)d\tilde{B}_{r}-\frac{1}{2}\int_{0}^{T}D_{0+}^{h+\frac{1}{2}}\left(\int_{0}^{\cdot}b(s,X_{s})ds\right)(r)^{2}dr\right)\right]
\]
\[
=E_{P}\left[\varphi\left(B^{h}\right)\exp\left(\int_{0}^{T}D_{0+}^{h+\frac{1}{2}}\left(\int_{0}^{\cdot}b(s,x+B_{s}^{h})ds\right)(r)dB_{r}-\frac{1}{2}\int_{0}^{T}D_{0+}^{h+\frac{1}{2}}\left(\int_{0}^{\cdot}b(s,x+B_{s}^{h})ds\right)(r)^{2}dr\right)\right]
\]
\[
=E_{P}\left[\varphi\left(\tilde{B}^{h}\right)\right].
\]
 For the almost surely uniqueness, assume there exists two weak solutions
$X^{1}$ and $X^{2}$ on the same probability space $\left(\Omega,\mathcal{F},\tilde{P}\right)$,
then $\max\left(X^{1},X^{2}\right)$ and $\min\left(X^{1},X^{2}\right)$
are again two solutions, and according to the above proof, have the
same distribution. We can rewrite $\max(x,y)=\frac{1}{2}\left(x+y+|x-y|\right)$
and $\min(x,y)=\frac{1}{2}\left(x+y-|x-y|\right),$ and therefore
$E\left[\max\left(X^{1},X^{2}\right)\right]=E\left[\min\left(X^{1},X^{2}\right)\right]$
imply that 
\[
E_{\tilde{P}}\left[|X^{1}-X^{2}|\right]=-E_{\tilde{P}}\left[|X^{1}-X^{2}|\right],
\]
but of course this is only true if $X^{1}=X^{2}$ a.s. 
\end{proof}
\begin{rem}
Although, in this paper we limit our selves to the study of SDE's
with drift coefficient of linear growth, one can easily obtain weak
existence of time-singular Volterra equations of the form 
\[
X_{t}=x+\int_{0}^{t}V(t,s)b(s,X_{s})ds+B_{t}^{h},
\]
where $V$ is a singular and square integrable deterministic Volterra
kernel, and the drift $b$ is still of linear growth. As we have seen
in Theorem \ref{thmUniqueness of equation}, we only need to prove
that 
\[
\parallel\int_{0}^{\cdot}V(\cdot,s)b(s,X_{s})ds\parallel_{h+\frac{1}{2}+\epsilon}<\infty.
\]
But this can be verified in the case that $|V(t,s)|\leq C|t-s|^{h+\frac{1}{2}+\epsilon-1},$as
we then obtain $|\int_{0}^{t}V(t,s)ds|\leq C|t-s|^{h+\frac{1}{2}+\epsilon}.$
We are currently writing a paper studying such singular Volterra equations,
with merely linear growth conditions on the drift, and their relation
to stochastic fractional differential equations. 
\end{rem}

\subsection{\label{subsec:Existence-of-strong}Existence of strong solutions}

The following three theorems can be found in \cite{NuaOki} given
by Proposition 6, and 7 and Theorem 8, but is cited here with modifications
to accommodate the multifractional Brownian motion. 
\begin{prop}
\label{prop:Krylov inequality}Let $X$ denote a weak solution constructed
above, but where the drift $b$ is a uniformly bounded function. Fix
a constant $\rho>\sup_{t\in[0,T]}h_{t}+1$. For any measurable and
non-negative function $g$ with $g:[0,T]\times\mathbb{R}\rightarrow\mathbb{R},$
there exists a constant $K$ depending on $\parallel b\parallel_{\infty}$,
$\rho,$ and $T$ such that 
\[
E\left[\int_{0}^{T}g(t,X_{t})dt\right]\leq K\left(\int_{0}^{T}\int_{\mathbb{R}}g(t,x)^{\rho}dxdt\right)^{\frac{1}{\rho}}.
\]
\end{prop}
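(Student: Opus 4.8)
The plan is to follow the classical Krylov estimate argument transferred to the multifractional setting via the Girsanov theorem established above. First I would move to the reference measure under which the noise becomes a pure Riemann--Liouville multifractional Brownian motion: since $b$ is uniformly bounded, the process $v_t = D_{0+}^{h+\frac12}\bigl(\int_0^\cdot b(s,X_s)\,ds\bigr)(t)$ satisfies the Novikov condition (by Theorem \ref{thm.:representation and boundedness of derivative-1} together with the uniform bound on $b$, exactly as in the proofs of Theorem \ref{thm:Existence of weak solutions} and Theorem \ref{thmUniqueness of equation}), so there is a measure $\tilde P$ with bounded density $d\tilde P/dP$ under which $X_t - x = \tilde B_t^h$ is an RLmBm. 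By Cauchy--Schwarz,
\[
E_P\left[\int_0^T g(t,X_t)\,dt\right] \le \left(E_{\tilde P}\left[\left(\frac{dP}{d\tilde P}\right)^2\right]\right)^{1/2}\left(E_{\tilde P}\left[\left(\int_0^T g(t,X_t)\,dt\right)^2\right]\right)^{1/2},
\]
and the first factor is a finite constant depending only on $\|b\|_\infty$, $h$, and $T$ (again via Fernique and the boundedness estimates on $v$). So it suffices to bound $E_{\tilde P}\bigl[(\int_0^T g(t,x+\tilde B_t^h)\,dt)^2\bigr]$, i.e. to prove the Krylov estimate for the \emph{free} RLmBm.

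Next I would expand the square and use that under $\tilde P$ the law of $\tilde B_t^h$ is Gaussian with variance $\sigma_h(t)^2 = \mathrm{Var}(\tilde B_t^h) = \frac{t^{2h_t}}{\Gamma(h_t+\frac12)^2(2h_t)}$ (read off from Proposition \ref{prop:(Covariance-of-RLmBm)} at $s=t$, or computed directly). Writing the double integral over $\triangle^{(2)}([0,T])$ and estimating the joint density of $(\tilde B_s^h, \tilde B_t^h)$, one reduces to
\[
E_{\tilde P}\left[\left(\int_0^T g(t,x+\tilde B_t^h)\,dt\right)^2\right] \le 2\int_{\triangle^{(2)}([0,T])} E_{\tilde P}\bigl[g(s,x+\tilde B_s^h)\,g(t,x+\tilde B_t^h)\bigr]\,ds\,dt,
\]
and the inner expectation is bounded, via the two-dimensional Gaussian density $p_{s,t}(y_1,y_2)$ of $(\tilde B_s^h,\tilde B_t^h)$, by $\int_{\mathbb R^2} g(s,x+y_1)g(t,x+y_2) p_{s,t}(y_1,y_2)\,dy_1 dy_2$. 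Applying H\"older's inequality in the spatial variables with exponent $\rho$ on $g$ and the conjugate exponent $\rho' = \rho/(\rho-1)$ on the density, the contribution is controlled by $\|g(s,\cdot)\|_{L^\rho(\mathbb R)}\|g(t,\cdot)\|_{L^\rho(\mathbb R)}$ times $\|p_{s,t}\|_{L^{\rho'}(\mathbb R^2)}$; after a further application of Cauchy--Schwarz (or Young) in the time variables this is dominated by $\bigl(\int_0^T\|g(t,\cdot)\|_{L^\rho}^{?}\,dt\bigr)$-type quantities, and one must check this closes up to the claimed right-hand side $\bigl(\int_0^T\int_{\mathbb R} g^\rho\bigr)^{1/\rho}$ — here the choice $\rho > \sup_t h_t + 1$ is exactly what is needed.

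The main obstacle is the last point: showing that $\|p_{s,t}\|_{L^{\rho'}(\mathbb R^2)}$ is integrable over $\triangle^{(2)}([0,T])$, since as $s\to t$ the two Gaussians become perfectly correlated and the density degenerates. One handles this by computing the determinant of the covariance matrix of $(\tilde B_s^h,\tilde B_t^h)$ and showing it vanishes like a power of $|t-s|$ controlled by $2h_*$; the $L^{\rho'}$ norm of a non-degenerate bivariate Gaussian scales like (det)$^{-(\rho'-1)/(2\rho')}$ up to constants, so integrability near the diagonal forces $\rho'$ (hence $\rho$) to satisfy a lower bound, and one verifies $\rho > h^* + 1$ suffices by a direct power count in $|t-s|$, using also the short-time regularity $|\tilde B_t^h - \tilde B_s^h|^2 \sim |t-s|^{2h}$ from the regularity proposition and the Gaussian scaling $\sigma_h(t)^2 \asymp t^{2h_t}$. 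Near $t=0$ the variance $\sigma_h(t)^2$ itself degenerates, and one checks by the same power count (using $\rho > h^* + 1$) that this endpoint singularity is also integrable. Assembling the constants from the three reductions — the density change, the Gaussian H\"older step, and the diagonal/endpoint integrability — yields a constant $K$ depending only on $\|b\|_\infty$, $\rho$, and $T$, completing the proof.
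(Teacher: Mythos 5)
Your first reduction (pass to $\tilde P$ via the Girsanov density and pull the density out by H\"older, using boundedness of $b$ and the estimates of Theorem \ref{thm.:representation and boundedness of derivative-1} to control the inverse density's moments) is sound and matches the paper's first step, apart from the slip that $d\tilde P/dP$ is not bounded (it is an exponential martingale; your subsequent moment bound is the correct fix). The genuine gap comes from fixing the H\"older exponent at $2$: by squaring the time integral you force yourself to control the \emph{joint} Gaussian law of $(\tilde B_s^h,\tilde B_t^h)$, i.e.\ a lower bound on $\det\Sigma_{s,t}$ (a local nondeterminism estimate for the Riemann--Liouville multifractional process), which you only sketch and which is nowhere established in the paper; and, more seriously, the exponent bookkeeping that you defer (``one must check this closes up'') does not obviously close at the stated threshold. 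Since $h_t<\tfrac12$, the proposition allows $\rho\in(h^{*}+1,2)$; after your spatial H\"older step the right-hand side involves quantities of the type $\int_0^T\int_0^T\parallel g(s,\cdot)\parallel_{L^{\rho}}\parallel g(t,\cdot)\parallel_{L^{\rho}}\parallel p_{s,t}\parallel_{L^{\rho'}}\,ds\,dt$, and to reach $\bigl(\int_0^T\int_{\mathbb R}g^{\rho}\bigr)^{2/\rho}$ one needs to dominate an $L^{2}$-in-time norm of $t\mapsto\parallel g(t,\cdot)\parallel_{L^{\rho}}$ by its $L^{\rho}$-in-time norm, which fails on $[0,T]$ when $\rho<2$; natural ways of absorbing the diagonal singularity $(t-s)^{-h/\rho}s^{-h/\rho}$ by H\"older in time lead to conditions strictly stronger than $\rho>h^{*}+1$. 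So as written the argument either needs $\rho$ larger than claimed or a different interpolation, plus the unproven two-dimensional density estimate.

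The paper avoids both difficulties by never squaring: it applies H\"older with free exponents $\alpha,\beta$ directly to $E_{\tilde P}\bigl[Z^{-1}\int_0^T g(t,X_t)\,dt\bigr]$, so that only the \emph{one-dimensional} marginal of $X_t$ under $\tilde P$ enters, namely a Gaussian with variance $c(h_t)t^{2h_t}$ read off from Proposition \ref{prop:(Covariance-of-RLmBm)}. A second H\"older inequality in the joint variables $(t,y)$ with exponents $\gamma,\gamma'$, $\gamma>\sup_t h_t+1$, reduces everything to the elementary integrability of $\int_0^T t^{(1-\gamma')h_t}\,dt$, with no diagonal issue and no covariance determinant; taking $\beta$ close to $1$ then yields the estimate with $\rho=\beta\gamma$ arbitrary in $(\sup_t h_t+1,\infty)$. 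If you want to salvage your route, you must both prove a two-point density (local nondeterminism) bound for the RLmBm and redo the power count honestly, and you should expect to need a larger $\rho$ than the proposition states; the paper's one-dimensional argument is the simpler and sharper path.
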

\begin{proof}
For compact notation, Let the co variance in proposition \ref{prop:(Covariance-of-RLmBm)}
be written as, 
\[
E[\left(B_{t}^{h}\right)^{2}]=c(h_{t})t^{2h_{t}}.
\]
Let $Z=\frac{d\tilde{P}}{dP}$ be the Radon-Nikodym derivative that
we constructed in Theorem \ref{thmUniqueness of equation}. Then by
the H\" {o}lder inequality for some $\frac{1}{\alpha}+\frac{1}{\beta}=1,$
we have the estimate
\[
E_{P}\left[\int_{0}^{T}g(t,X_{t})dt\right]=E_{\tilde{P}}\left[Z^{-1}\int_{0}^{T}g(t,X_{t})dt\right]
\]
\[
\leq\left(E_{\tilde{P}}\left[Z^{-\alpha}\right]\right)^{\frac{1}{\alpha}}\left(E_{\tilde{P}}\left[\int_{0}^{T}g(t,X_{t})^{\beta}dt\right]\right)^{\frac{1}{\beta}}.
\]
 The expectation of $Z^{-\alpha}$ is explicitly given for any $\alpha>1$
by 
\[
E_{\tilde{P}}\left[Z^{-\alpha}\right]=E_{\tilde{P}}\left[\exp\left(\alpha\int_{0}^{T}u_{s}dB_{s}+\frac{\alpha}{2}\int_{0}^{T}u_{s}^{2}ds\right)\right]
\]
\[
=E_{\tilde{P}}\left[\exp\left(\alpha\int_{0}^{T}u_{s}d\tilde{B}_{s}-\frac{\alpha}{2}\int_{0}^{T}u_{s}^{2}ds\right)\right]\leq K(\parallel b\parallel_{\infty},T,\alpha),
\]
obtained by arguments given in Theorem \ref{thmUniqueness of equation}.
Next we look at the other term, 

\[
E_{\tilde{P}}\left[\int_{0}^{T}g(t,X_{t})^{\beta}dt\right]
\]
\[
=\int_{0}^{T}\frac{1}{\sqrt{2\pi}c(h_{t})t^{h_{t}}}\int_{\mathbb{R}}g(t,x)^{\beta}\exp\left(-\frac{\left(y-x\right)^{2}}{2c(h_{t})t^{2h_{t}}}\right)dydt
\]
\[
\leq\frac{1}{\sqrt{2\pi}}\left(\int_{0}^{T}\int_{\mathbb{R}}g(t,y)^{\beta\gamma}dydt\right)^{\frac{1}{\gamma}}\left(\int_{0}^{T}t^{-h_{t}\gamma'}\int_{\mathbb{R}}e^{-\frac{\gamma'\left(x-y\right)^{2}}{2t^{2h_{t}}}}dydt\right)^{\frac{1}{\gamma'}},
\]
 where we applied the H\" {o}lder inequality again, with $\gamma=\frac{\gamma'}{\gamma'-1}\Leftrightarrow\frac{1}{\gamma}+\frac{1}{\gamma'}=1$
with $\sup_{t\in[0,T]}h_{t}+1<\gamma$. Explicit calculations then
yield 
\[
\leq\frac{1}{\sqrt{2\pi}}\left(\int_{0}^{T}\int_{\mathbb{R}}g(t,y)^{\beta\gamma}dydt\right)^{\frac{1}{\gamma}}\left(\int_{0}^{T}t^{\left(1-\gamma'\right)h_{t}}dt\right)^{\frac{1}{\gamma'}}\leq\frac{c(T,\gamma)}{\sqrt{2\pi}}\left(\int_{0}^{T}\int_{\mathbb{R}}g(t,y)^{\beta\gamma}dydt\right)^{\frac{1}{\gamma}},
\]
and the result follows. 
\end{proof}
The next proposition will show that if we are able to find a sequence
of bounded and measurable functions $\left\{ b_{n}\right\} _{n\in\mathbb{N}}$
converging to the drift function $b$ in equation (\ref{eq:Weak Solution deff. eq.})
almost surely, and the corresponding solutions $X^{(n)}$ to equation
(\ref{eq:Weak Solution deff. eq.}) constructed with $b_{n}$ converges
to some process $X$. Then $X$ is a solution to the original differential
equation (\ref{eq:Weak Solution deff. eq.}). When we have proved
this result, we will show that there actually exists such a sequence
of functions $\left\{ b_{n}\right\} _{n\in\mathbb{N}}$, when $b$
is of a certain class, and therefore, by the path wise uniqueness
property of the solution, we obtain that the solution to equation
(\ref{eq:Weak Solution deff. eq.}) is a strong solution. \\

\begin{prop}
\label{prop:convergence of drift b}Let $\left\{ b_{n}\right\} _{n\in\mathbb{N}}$
be a sequence of bounded and measurable functions on $[0,T]\times\mathbb{R}$
bounded uniformly by a constant $C$ such that 
\[
b_{n}(t,x)\rightarrow b(t,x)\,\,\,for\,\,\,a.a.\,\,(x,t)\in[0,T]\times\mathbb{R}.
\]
 Also, assume that the corresponding solutions $X_{t}^{(n)}$ of equation
(\ref{eq:Weak Solution deff. eq.}) given by 
\[
X_{t}^{(n)}=x+\int_{0}^{t}b_{n}\left(s,X_{s}\right)ds+B_{t}^{h},
\]
 converge to a process $X_{t}$ for all $t\in[0,T].$ Then $X_{\cdot}$
is a solution to equation \ref{eq:Weak Solution deff. eq.}. 
\end{prop}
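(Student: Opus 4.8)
The plan is to identify the limit of $\int_{0}^{t}b_{n}(s,X_{s}^{(n)})\,ds$. Since $X_{t}^{(n)}=x+\int_{0}^{t}b_{n}(s,X_{s}^{(n)})\,ds+B_{t}^{h}$ and $B^{h}$ is fixed, the convergence $X^{(n)}\to X$ forces $\int_{0}^{t}b_{n}(s,X_{s}^{(n)})\,ds\to X_{t}-x-B_{t}^{h}$, so it suffices to prove that this limit equals $\int_{0}^{t}b(s,X_{s})\,ds$, first for $t$ in a countable dense set and then for every $t$ by continuity in $t$ of both sides (the right-hand side is continuous because $b$, being a pointwise limit of functions bounded by $C$, is itself bounded by $C$). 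Before that I would record two a priori facts: (a) since $|b_{n}|\le C$ one has $\|X^{(n)}\|_{\infty;[0,T]}\le|x|+CT+\|B^{h}\|_{\infty;[0,T]}$, so by Fernique's theorem $\sup_{n}P(\|X^{(n)}\|_{\infty;[0,T]}>R)\to0$ as $R\to\infty$, and the same tail bound holds for $X$; (b) the Krylov-type estimate of Proposition~\ref{prop:Krylov inequality} applies to every $X^{(n)}$ with one and the same constant $K=K(C,\rho,T)$, because that constant depends on the drift only through the uniform bound $\|b_{n}\|_{\infty}\le C$.

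Next I would transfer the Krylov estimate to the limit process $X$. For any non-negative continuous $\phi$ with compact support, using that $X_{t}^{(n)}\to X_{t}$ (passing to an a.s.\ convergent subsequence if necessary), Fubini and bounded convergence in $t$ give $\int_{0}^{T}\phi(t,X_{t}^{(n)})\,dt\to\int_{0}^{T}\phi(t,X_{t})\,dt$ a.s., whence by Fatou
\[
E\!\left[\int_{0}^{T}\phi(t,X_{t})\,dt\right]\le\liminf_{n}E\!\left[\int_{0}^{T}\phi(t,X_{t}^{(n)})\,dt\right]\le K\Big(\int_{0}^{T}\!\!\int_{\mathbb{R}}\phi(t,y)^{\rho}\,dy\,dt\Big)^{1/\rho}.
\]
Hence the occupation measure $\mu_{X}(A):=E[\int_{0}^{T}\mathbf{1}_{A}(t,X_{t})\,dt]$ on $[0,T]\times\mathbb{R}$ is absolutely continuous with a density $q_{X}\in L^{\rho'}$, $\rho'=\rho/(\rho-1)$, of norm $\le K$, and therefore $E[\int_{0}^{T}g(t,X_{t})\,dt]\le K\|g\|_{L^{\rho}([0,T]\times\mathbb{R})}$ for every non-negative Borel $g$.

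With these tools I would fix $\delta>0$, choose $R$ so large that the tail probabilities in (a) are below $\delta$, and pick a bounded continuous $\psi$ with $\int_{0}^{T}\!\int_{-R}^{R}|b-\psi|^{\rho}\,dy\,dt<\delta^{\rho}$. Writing
\[
\int_{0}^{t}b_{n}(s,X_{s}^{(n)})\,ds-\int_{0}^{t}b(s,X_{s})\,ds=\mathcal{A}_{n}+\mathcal{B}_{n}+\mathcal{C}_{n}+\mathcal{D},
\]
with $\mathcal{A}_{n}=\int_{0}^{t}(b_{n}-b)(s,X_{s}^{(n)})\,ds$, $\mathcal{B}_{n}=\int_{0}^{t}(b-\psi)(s,X_{s}^{(n)})\,ds$, $\mathcal{C}_{n}=\int_{0}^{t}\psi(s,X_{s}^{(n)})\,ds-\int_{0}^{t}\psi(s,X_{s})\,ds$ and $\mathcal{D}=\int_{0}^{t}(\psi-b)(s,X_{s})\,ds$, I would estimate each term: $\mathcal{C}_{n}\to0$ a.s.\ by bounded convergence since $\psi$ is continuous and $X^{(n)}\to X$; for $\mathcal{A}_{n}$ and $\mathcal{B}_{n}$ split the integrand according to whether $|X_{s}^{(n)}|\le R$, controlling the part on $\{|X_{s}^{(n)}|>R\}$ by $\int_{0}^{T}\mathbf{1}_{\{|X_{s}^{(n)}|>R\}}\,ds\le T\,\mathbf{1}_{\{\|X^{(n)}\|_{\infty}>R\}}$ (expectation $\le CT\delta$, resp.\ $\le C_{\psi}T\delta$, uniformly in $n$) and the part on $\{|X_{s}^{(n)}|\le R\}$ by the $n$-uniform Krylov estimate, which gives $E[|\mathcal{A}_{n}|]\le K(\int_{0}^{T}\!\int_{-R}^{R}|b_{n}-b|^{\rho})^{1/\rho}\to0$ (the $L^{\rho}$ convergence on the bounded window following from $b_{n}\to b$ a.e.\ and dominated convergence) and $E[|\mathcal{B}_{n}|]\le K\delta$; finally $\mathcal{D}$ is treated the same way using the transferred Krylov estimate for $X$, giving $E[|\mathcal{D}|]\le K\delta+C_{\psi}T\delta$. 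Letting $n\to\infty$ and then $\delta\downarrow0$ yields $\int_{0}^{t}b_{n}(s,X_{s}^{(n)})\,ds\to\int_{0}^{t}b(s,X_{s})\,ds$ in $L^{1}$, hence along a subsequence almost surely, which is exactly the claim.

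The hard part will be step two, namely obtaining the Krylov/occupation-density bound for the limit $X$ (Proposition~\ref{prop:Krylov inequality} is available only for the Girsanov-constructed weak solutions with bounded drift, not a priori for a mere limit), together with the bookkeeping needed to keep every $L^{\rho}$-estimate localised to a bounded $x$-window: the drift lives on all of $\mathbb{R}$, so $b_{n}\to b$ in $L^{\rho}([0,T]\times\mathbb{R})$ may fail even though it holds on every bounded set, and this is precisely what the uniform tightness of $\{X^{(n)}\}$ coming from Fernique's theorem repairs.
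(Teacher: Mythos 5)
Your argument is correct, and it reaches the conclusion by a genuinely different decomposition than the paper's. The paper bounds $|b_n(s,X^{(n)}_s)-b(s,X_s)|$ by the two terms $J_1(n)=E[\int_0^T|b_n-b|(s,X_s)\,ds]$ and $J_2(n)=\sup_k E[\int_0^T|b_k(s,X^{(n)}_s)-b_k(s,X_s)|\,ds]$, and then handles the sup over $k$ by invoking relative compactness (Frechet--Kolmogorov) of the family $\{b_k\}$ in $L^2([0,T]\times[-R,R])$ to replace all the $b_k$ simultaneously by a finite net of smooth functions $H_i$, using a smooth cutoff $\kappa(X_t/R)$ for the unbounded region and Proposition \ref{prop:Krylov inequality} to control the $L^2$-small errors. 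You avoid the sup over $k$ and the compactness argument entirely: you compare $b_n(s,X^{(n)}_s)$ directly with $b(s,X^{(n)}_s)$ (using $b_n\to b$ a.e., hence in $L^{\rho}$ on the bounded window by dominated convergence, plus the $n$-uniform Krylov bound), and you mediate between the two processes through a single continuous approximant $\psi$ of the limit drift $b$, with an indicator cutoff justified by the uniform bound $\|X^{(n)}\|_{\infty}\le|x|+CT+\|B^h\|_{\infty}$. What your route buys, beyond economy, is that you explicitly justify the one step the paper leaves implicit: the terms involving the limit process ($I_3(k)$ and $J_1(n)$ in the paper, your $\mathcal{D}$) require a Krylov-type bound for $X$ itself, which is not covered by the statement of Proposition \ref{prop:Krylov inequality} since $X$ is not yet known to be a weak solution; your Fatou/occupation-density transfer fills this gap cleanly, and the extension from $C_c$ test functions to Borel $g$ via outer regularity is standard and correct. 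The remaining points (a.e.\ versus in-probability convergence of $X^{(n)}$, handled by subsequences; identifying the a.s.\ limit $X_t-x-B_t^h$ with the $L^1$ limit $\int_0^t b(s,X_s)\,ds$ first for fixed $t$ and then for all $t$ by continuity) are handled adequately, though you should note that $|b|\le C$ a.e.\ follows from the pointwise convergence, which you use for the continuity in $t$ of the limit integral.
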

\begin{proof}
We need to show that 
\[
E\left[\int_{0}^{T}\left|b_{n}\left(s,X_{s}^{(n)}\right)-b\left(s,X_{s}\right)\right|ds\right]\rightarrow0\,\,\,as\,\,\,n\rightarrow\infty.
\]
 Adding and subtracting $b_{n}\left(s,X_{s}\right)$ in the above
expression, we can find 
\[
E\left[\int_{0}^{T}\left|b_{n}\left(s,X_{s}^{(n)}\right)-b\left(s,X_{s}\right)\right|ds\right]
\]
\[
\leq E\left[\int_{0}^{T}\left|b_{n}\left(s,X_{s}\right)-b\left(s,X_{s}\right)\right|ds\right]
\]
\[
+\sup_{k\in\mathbb{N}}E\left[\int_{0}^{T}\left|b_{k}\left(s,X_{s}^{(n)}\right)-b_{k}\left(s,X_{s}\right)\right|ds\right]=:J_{1}(n)+J_{2}(n).
\]
Moreover, there exists a smooth function $\kappa:\mathbb{R}\rightarrow\mathbb{R}$
such that $\kappa(0)=1$ and $\kappa(z)\in[0,1]$ for all $|z|<1$
and $\kappa(z)=0$ for all $|z|\geq1$. Fix $\epsilon>0$ and let
$R$ be a constant such that 
\[
E\left[\int_{0}^{T}1-\kappa\left(\frac{X_{t}}{R}\right)dt\right]<\epsilon.
\]
By the Frechet-Kolmogorov theorem (a compactness criterion for $L^{2}\left(B\right)$
when $B$ is a bounded subset of $\mathbb{R}^{d}$), the sequence
of functions $\left\{ b_{k}\right\} _{k\in\mathbb{N}}$ is relatively
compact in $L^{2}\left(\left[0,T\right]\times\left[-R,R\right]\right)$,
and therefore we can find a finite set of smooth functions $\{H_{1},..,H_{N}\}\in L^{2}\left(\left[0,T\right]\times\left[-R,R\right]\right)$
such that for every $k$, 
\[
\int_{0}^{T}\int_{-R}^{R}|b_{k}(t,x)-H_{i}(t,x)|dxdt<\epsilon^{2}
\]
 for some $H_{i}\in\{H_{1},..,H_{N}\}$. Using this we can find 
\[
E\left[\int_{0}^{T}\left|b_{k}\left(s,X_{s}^{(n)}\right)-b_{k}\left(s,X_{s}\right)\right|ds\right]\leq E\left[\int_{0}^{T}|b_{k}\left(t,X_{t}^{(n)}\right)-H_{i}\left(t,X_{t}^{(n)}\right)|dt\right]
\]
\[
+E\left[\int_{0}^{T}|H_{i}(t,X^{(n)})-H_{i}\left(t,X_{t}\right)|dt\right]+E\left[\int_{0}^{T}|b_{k}\left(t,X_{t}\right)-H_{i}\left(t,X_{t}\right)|dt\right]
\]
\[
=:I_{1}(n,k)+I_{2}(n)+I_{3}(k).
\]
 We begin by looking at the components $I_{u}$, $u=1,2,3$ separately,
and start with $u=1.$ Using the $\kappa$ function we defined earlier,
we can see that 
\[
I_{1}(n,k)=E\left[\int_{0}^{T}\kappa\left(\frac{X_{t}}{R}\right)|b_{k}\left(t,X_{t}^{(n)}\right)-H_{i}\left(t,X_{t}^{(n)}\right)|dt\right]
\]
\[
+E\left[\int_{0}^{T}\left(1-\kappa\left(\frac{X_{t}}{R}\right)\right)|b_{k}\left(t,X_{t}^{(n)}\right)-H_{i}\left(t,X_{t}^{(n)}\right)|dt\right],
\]
then by Proposition \ref{prop:Krylov inequality}, we know that the
first of the two elements above is bounded in the following way, 
\[
E\left[\int_{0}^{T}\kappa\left(\frac{X_{t}^{(n)}}{R}\right)|b_{k}\left(t,X_{t}^{(n)}\right)-H_{i}\left(t,X_{t}^{(n)}\right)|dt\right]
\]
\[
\leq K\left(\int_{0}^{T}\int_{-R}^{R}|b_{k}\left(t,x\right)-H_{i}\left(t,x\right)|^{2}dxdt\right)^{\frac{1}{2}},
\]
 where $K_{1}$ depends on $\parallel b\parallel_{\infty}$ and $\sup_{i}\parallel H_{i}\parallel_{\infty}$
and $T$. We can then show for the second component, 
\[
E\left[\int_{0}^{T}\left(1-\kappa\left(\frac{X_{t}^{(n)}}{R}\right)\right)|b_{k}\left(t,X_{t}^{(n)}\right)-H_{i}\left(t,X_{t}^{(n)}\right)|dt\right]
\]
\[
\leq K_{2}E\left[\int_{0}^{T}\left(1-\kappa\left(\frac{X_{t}^{(n)}}{R}\right)\right)dt\right],
\]
where $K_{2}$ also depends on $\parallel b\parallel_{\infty}$ and
$\sup_{i}\parallel H_{i}\parallel_{\infty}$. Set $K=\max K_{1},K_{2}$,
by the properties of $H_{i}$ and $\kappa$ shown above in relation
to the sequence of $\left\{ b_{k}\right\} ,$ we have that, 
\[
\lim_{n\rightarrow\infty}\sup_{k}I_{1}(n,k)\leq K\epsilon.
\]
 In the same way, we can show that for a constant $K$ similar to
the one above, 
\[
\sup_{k}I_{3}(k)\leq K\epsilon,
\]
at last, of course $\lim_{n\rightarrow\infty}I_{2}(n)=0$ since $X^{(n)}\rightarrow X,$
and therefore, 
\[
\lim_{n\rightarrow\infty}J_{2}(n)=\lim_{n\rightarrow\infty}\sup_{k}\left(I_{1}(n,k)+I_{2}(n)+I_{3}(k)\right)=0.
\]
 For $J_{1}$ we can use a very similar argument, as we can decompose
it into 
\[
J_{1}(n)=E\left[\int_{0}^{T}\kappa\left(\frac{X_{t}}{R}\right)\left|b_{n}\left(s,X_{s}\right)-b\left(s,X_{s}\right)\right|ds\right]
\]
\[
+E\left[\int_{0}^{T}\left(1-\kappa\left(\frac{X_{t}}{R}\right)\right)\left|b_{n}\left(s,X_{s}\right)-b\left(s,X_{s}\right)\right|ds\right]
\]
 and use Proposition \ref{prop:Krylov inequality}, just as above. 
\end{proof}
The next theorem shows how we can combine Proposition \ref{prop:Krylov inequality}
and Proposition \ref{prop:convergence of drift b} to obtain strong
solutions when $b$ is of linear growth. Nualart and Ouknine do this
by constructing a proper sequence of functions which is bounded and
measurable such that this sequence converges to $b,$ and we can apply
Propositions \ref{prop:Krylov inequality}, and \ref{prop:convergence of drift b}. 
\begin{thm}
Let the drift $b$ in the SDE in equation (\ref{eq:Weak Solution deff. eq.})
be of linear growth, i.e. $|b(t,x)|\leq C(1+|x|)$ for a.a. $(t,x)\in[0,T]\times\mathbb{R}$.
Then there exists a unique strong solution to equation (\ref{eq:Weak Solution deff. eq.}). 
\end{thm}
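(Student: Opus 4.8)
The strategy, following Nualart and Ouknine \cite{NuaOki}, is to approximate the linear-growth drift by uniformly bounded drifts, solve the truncated equations strongly, and pass to the limit using Propositions \ref{prop:Krylov inequality} and \ref{prop:convergence of drift b}. First I would choose a sequence $\{b_n\}$ of bounded measurable functions on $[0,T]\times\mathbb R$, obtained by truncating $b$, with $b_n(t,x)\to b(t,x)$ for a.a.\ $(t,x)$ and $|b_n(t,x)|\le C(1+|x|)$ with the same constant $C$ as in the linear-growth hypothesis (in particular uniformly in $n$). Since each $b_n$ is bounded, equation (\ref{eq:Weak Solution deff. eq.}) with drift $b_n$ has a weak solution by Theorem \ref{thm:Existence of weak solutions}, which is pathwise unique by Theorem \ref{thmUniqueness of equation}; hence, by the Yamada--Watanabe principle, it has a strong solution $X^{(n)}$, and all the $X^{(n)}$ may be realized on one probability space driven by the same RLmBm $B^h$.

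Next I would establish a uniform a priori bound. As in the proof of Theorem \ref{thmUniqueness of equation}, from $|b_n(t,x)|\le C(1+|x|)$ and Gr\"onwall's inequality one gets
\[
\sup_{n}|X^{(n)}_\cdot|_{\infty;[0,T]}\le \tilde C\big(|x|+|B^h_\cdot|_\infty+T\big)\exp(\tilde C T)\qquad P\text{-a.s.}
\]
Thus the family $\{X^{(n)}\}$ does not explode and, for each $\varepsilon>0$, stays in a fixed ball $[-R,R]$ with probability at least $1-\varepsilon$ uniformly in $n$; on that ball the $b_n$ agree with uniformly bounded functions, which is the localization that lets one invoke Propositions \ref{prop:Krylov inequality} and \ref{prop:convergence of drift b}, whose hypotheses require a uniform bound on the drift.

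It then remains to show that $X^{(n)}$ converges and to identify the limit. Choosing the truncation so that $b_n\le b_{n+1}$ pointwise (a one-sided truncation of $b$, or the device used in \cite{NuaOki}) and applying the pathwise comparison theorem for (\ref{eq:Weak Solution deff. eq.}) --- valid because the driving noise $B^h$ is fixed and only the drift varies --- gives $X^{(n)}_t\le X^{(n+1)}_t$ for all $t$, so the monotone sequence converges $P$-a.s.\ to a finite limit $X_t$ by the bound above. Proposition \ref{prop:convergence of drift b}, applied with this localization, shows that $X$ solves (\ref{eq:Weak Solution deff. eq.}) with the original $b$; since each $X^{(n)}$ is $\mathcal F_t$-adapted, $X$ is a strong solution, and Theorem \ref{thmUniqueness of equation} gives its uniqueness.

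The main obstacle, I expect, is the convergence step: arranging the truncation $b_n$ so that the approximating solutions are \emph{ordered} (a symmetric truncation is not monotone in $n$), and confirming that a pathwise comparison principle is indeed available for this non-Markovian multifractional equation --- together with the bookkeeping required to reconcile the unbounded (linear) growth of $b$ with the uniform-boundedness assumptions built into Propositions \ref{prop:Krylov inequality} and \ref{prop:convergence of drift b}.
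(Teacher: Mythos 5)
Your overall strategy (truncate the drift, solve the bounded equations, order the approximations, pass to the limit with Propositions \ref{prop:Krylov inequality} and \ref{prop:convergence of drift b}, and get uniqueness from Theorem \ref{thmUniqueness of equation}) is the same as the paper's, but the two steps you yourself flag as the ``main obstacle'' are precisely the content of the proof, and they are left unresolved. The paper does not stop at bounded measurable truncations: it sets $b_{R}(t,x)=b(t,(x\vee-R)\wedge R)$, mollifies to get $b_{R,j}(t,x)=j\int_{\mathbb{R}}b_{R}(t,y)\varphi(j(x-y))dy$, which is Lipschitz in $x$ uniformly in $t$, and then forms the finite infima $\tilde{b}_{R,n,k}=\bigwedge_{j=n}^{k}b_{R,j}$ and $\tilde{b}_{R,n}=\bigwedge_{j=n}^{\infty}b_{R,j}$. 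These are still Lipschitz, decrease in $k$, and their limits increase in $n$. Because the drifts are Lipschitz and the noise in (\ref{eq:Weak Solution deff. eq.}) is additive, the approximating equations have unique strong solutions constructed pathwise (fixed path of $B^{h}$, Picard iteration for an ODE with continuous forcing), and the classical comparison theorem for ODEs applies, giving the monotone, $[-R,R]$-bounded sequences $\tilde{X}_{R,n,k}\downarrow\tilde{X}_{R,n}\uparrow X_{R}$ whose limit is identified via Proposition \ref{prop:convergence of drift b}. This monotone Lipschitz approximation is exactly the ``device used in \cite{NuaOki}'' you allude to; without it your plan has no valid comparison step, since a pathwise comparison principle is not available when both drifts are merely bounded and measurable (your symmetric truncations $b_{n}$), even though each equation is pathwise unique.

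The second gap is your appeal to Yamada--Watanabe to upgrade the weak solutions of Theorem \ref{thm:Existence of weak solutions} with bounded drift to strong ones. The classical Yamada--Watanabe theorem is formulated for It\^{o} equations driven by semimartingales; $B^{h}$ is not a semimartingale, so ``weak existence plus pathwise uniqueness implies strong existence'' cannot simply be quoted here (abstract extensions exist, but they would have to be invoked and verified). The paper's construction deliberately avoids this: strong solvability is only ever needed for Lipschitz drifts, where it is immediate pathwise, and adaptedness of the final solution follows because it is an almost sure monotone limit of adapted processes. So while your outline points in the right direction, the argument as proposed is incomplete at exactly the places where the paper's proof does its work.
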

\begin{proof}
The path-wise uniqueness is already obtained in Theorem \ref{thmUniqueness of equation}.
Therefore, the object of interest here is the strong existence. Define
a function $b_{R}(t,x)=b(t,\left(x\vee-R\right)\wedge R),$ which
by the linear growth condition we can see is bounded and measurable.
Next, let $\varphi$ be a non-negative test function with compact
support in $\mathbb{R}$ such that $\int_{\mathbb{R}}\varphi(y)dy=1$.
For $j\in\mathbb{N}$ define the function 
\[
b_{R,j}(t,x)=j\int_{\mathbb{R}}b_{R}(t,y)\varphi(j(x-y))dy,
\]
which one can verify is Lipschitz in the second variable uniformly
in $t$., Indeed, 
\[
|b_{R,j}(t,x)-b_{R,j}(t,y)|\leq j^{2}\sup_{t\in[0,T]}\int_{\mathbb{R}}b_{R}(t,u)du\times|x-y|.
\]
 Furthermore, define the functions 
\[
\tilde{b}_{R,n,k}=\bigwedge_{j=n}^{k}b_{R,j}\,\,\,and\,\,\,\tilde{b}_{R,n}=\bigwedge_{j=n}^{\infty}b_{R,j}.
\]
Both is again Lipschitz in the second variable uniformly in $t$,
and we see that $b_{R,n,k}\downarrow b_{R,n}$ as $k\rightarrow\infty$
for a.a. $x\in\mathbb{R}$ and $t\in[0,T]$.  Now, we can construct
a unique solution $\tilde{X}_{R,n,k}$ from equation (\ref{eq:Weak Solution deff. eq.})
with corresponding drift coefficient $b_{R,n,k}.$ By the comparison
theorem for ordinary differential equations, the sequence $\tilde{X}_{R,n,k}$
is decreasing as $k$ grows. Therefore $\tilde{X}_{R,n,k}$ has a
limit $\tilde{X}_{R,n}$. By the comparison theorem again, we have
that $\tilde{X}_{R,n,k}$ and $\tilde{X}_{R,n}$ are bounded from
above and below by $R$ and $-R$ respectively. Moreover, the solution
$\tilde{X}_{R,n}$ is increasing as $n$ gets larger, and converge
to a limit $X_{R}$. Now we can apply Proposition \ref{prop:convergence of drift b},
and we obtain that there exists a unique strong solution. 
\end{proof}

\section{Conclusion}

In this article we have constructed a new type of multifractional
derivative operator acting as the inverse of the generalized fractional
integral. We have then applied this derivative operator to construct
strong solutions to SDE's where the drift is of linear growth and
the noise is of non-stationary and multifractional type. The applications
of such equations may be found in a range of fields, including finance,
physics and geology. For future work, we are currently working on
a way to construct solutions to Volterra SDE's with singular Volterra
drift, and driven by self exciting multifractional noise. The methodology
will be similar to the above, but we will need to generalize the last
to sections to account for Volterra kernels of singular type. Furthermore,
we believe that the multifractional differential operator may shed
new light on both multifractional (partial) differential equations,
with possibly random order differentiation, and are currently working
on a project relating to such equations. 

\newpage{}

\section*{Appendix\label{sec:Apendix}}

In this section we will prove that the integral 
\[
\int_{0}^{x}K(x,t)g(t)dt:=\int_{0}^{x}\frac{\Gamma\left(\alpha(t)\right)g(t)}{\left(x-t\right)^{\alpha(t)}}dt,
\]
 satisfies for all $x>0,$ 

\[
\frac{d}{dx}\left(\int_{0}^{x}K(x,t)g(t)dt\right)
\]
\[
=g(x)\frac{d}{dx}\int_{0}^{x}K(x,t)dt-\int_{0}^{x}\frac{d}{dx}K(x,t)\left(g(x)-g(t)\right)dt.
\]
 The existence of the terms on the right hand side is shown in Lemma
\ref{lem:eksistens av G0 operator}, where it is shown that the right
hand side is elements of $L^{p}\left([0,T]\right)$, which is sufficient
for our application. To see the above relation, we shall use the definition
of the derivative. Start by adding and subtracting the point $g(x),$
and get
\[
\int_{0}^{x}K(x,t)g(t)dt=-\int_{0}^{x}K(x,t)\left(g(x)-g(t)\right)dt+g(x)\int_{0}^{x}K(x,t)dt=J_{1}(x)+J_{2}(x).
\]
 We then look at the right hand side as increments between $x+h$
to $x$, and after simple manipulations get
\[
J_{1}(x+h)-J_{1}(x)=\int_{0}^{x}K(x+h,t)\left(g(x+h)+\left(g(x)-g(x)\right)-g(t)\right)-K(x,t)\left(g(x)-g(t)\right)dt
\]
\[
+\int_{x}^{x+h}K(x+h,t)\left(g(x+h)-g(t)\right)dt
\]
\[
=\int_{0}^{x}\left(K(x+h,t)-K(x,t)\right)\left(g(x)-g(t)\right)dt+\left(g(x+h)-g(x)\right)\int_{0}^{x}K(x+h,t)dt
\]
\[
+\int_{x}^{x+h}K(x+h,t)\left(g(x+h)-g(t)\right)dt.
\]
 The second part is given by 
\[
J_{2}(x+h)-J_{2}(x)=g(x+h)\int_{0}^{x+h}K(x+h,t)dt-g(x)\int_{0}^{x}K(x,t)dt
\]
\[
=\left(g(x+h)-g(x)\right)\int_{0}^{x+h}K(x+h,t)dt+g(x)\int_{0}^{x+h}K(x+h,t)dt-g(x)\int_{0}^{x}K(x,t)
\]
\[
=\left(g(x+h)-g(x)\right)\int_{0}^{x}K(x+h,t)dt+\left(g(x+h)-g(x)\right)\int_{x}^{x+h}K(x+h,t)dt
\]
\[
+g(x)\left(\int_{0}^{x+h}K(x+h,t)dt-\int_{0}^{x}K(x,t)dt\right).
\]
 Combining $J_{1}$ and $J_{2}$ and we get and notice that the term
$\left(g(x+h)-g(x)\right)\int_{0}^{x}K(x+h,t)dt$ cancels out and
we obtain
\[
-\left(J_{1}(x+h)-J_{1}(x)\right)+J_{2}(x+h)-J_{2}(x)
\]
\[
=-\int_{0}^{x}\left(K(x+h,t)-K(x,t)\right)\left(g(x)-g(t)\right)dt+g(x)\left(\int_{0}^{x+h}K(x+h,t)dt-\int_{0}^{x}K(x,t)dt\right)
\]
\[
+\left(g(x+h)-g(x)\right)\int_{x}^{x+h}K(x+h,t)dt-\int_{x}^{x+h}K(x+h,t)\left(g(x+h)-g(t)\right)dt.
\]
 Let us first look at the last line above, 
\[
\left(g(x+h)-g(x)\right)\int_{x}^{x+h}K(x+h,t)dt-\int_{x}^{x+h}K(x+h,t)\left(g(x+h)-g(t)\right)dt=\int_{x}^{x+h}K(x+h,t)(g(t)-g(x))dt.
\]
we know, 
\[
|\frac{\Gamma\left(\alpha\left(t\right)\right)}{\left(x+h-t\right)^{\alpha\left(t\right)}}(g(x)-g(t))|\leq C\left(\frac{|x-t|^{\alpha\left(t\right)+\epsilon}}{|x+h-t|^{\alpha\left(t\right)}}\right)\rightarrow|x-t|^{\epsilon},
\]
 as $h\rightarrow0$. Further, we see
\[
\left|\int_{x}^{x+h}K(x+h,t)(g(t)-g(x))dt\right|\leq C\int_{x}^{x+h}\frac{|x-t|^{\alpha\left(t\right)+\epsilon}}{|x+h-t|^{\alpha\left(t\right)}}dt.
\]
Moreover, set $t=x+\lambda h$, and then we have 
\[
\int_{x}^{x+h}\frac{|x-t|^{\alpha\left(t\right)+\epsilon}}{|x+h-t|^{\alpha\left(t\right)}}dt=h^{1+\epsilon}\int_{0}^{1}|\lambda|^{\alpha(x+\lambda h)+\epsilon}|1-\lambda|^{-\alpha(x+\lambda h)}d\lambda.
\]
But the integral on the right hand side is bounded, i.e 
\[
\int_{0}^{1}|\lambda|^{\alpha(x+\lambda h)+\epsilon}|1-\lambda|^{-\alpha(x+\lambda h)}d\lambda\leq B\left(\alpha_{*}+\epsilon+1,1-\alpha^{*}\right).
\]
This imply that 
\[
\lim_{h\rightarrow0}\frac{\int_{x}^{x+h}K(x+h,t)(g(t)-g(x))dt}{h}=0.
\]
 We are left to look at the limits of the integrals 
\[
-\int_{0}^{x}\left(K(x+h,t)-K(x,t)\right)\left(g(x)-g(t)\right)dt\,\,\,and\,\,\,g(x)\left(\int_{0}^{x+h}K(x+h,t)dt-\int_{0}^{x}K(x,t)dt\right).
\]
 We will look at them separately, staring with the one one the left
above. By dominated convergence, we have 
\[
\lim_{h\rightarrow0}\frac{\int_{0}^{x}\left(K(x+h,t)-K(x,t)\right)\left(g(x)-g(t)\right)dt}{h}=\int_{0}^{x}\frac{d}{dx}K(x,t)\left(g(x)-g(t)\right)dt.
\]
Secondly, it is straight forward to see that, 
\[
\lim_{h\rightarrow0}\frac{g(x)\left(\int_{0}^{x+h}K(x+h,t)dt-\int_{0}^{x}K(x,t)dt\right)}{h}=g(x)\frac{d}{dx}\int_{0}^{x}K(x,t)dt.
\]
 The fact that $\frac{d}{dx}\int_{0}^{x}K(x,t)dt$ indeed exists is
proved in Lemma \ref{lem:eksistens av G0 operator} using the fact
that $K(x,t)=\frac{\Gamma\left(\alpha(t)\right)}{\left(x-t\right)^{\alpha(t)}}$. 

\bibliographystyle{plain}

\begin{thebibliography}{10}

\bibitem{AyCoVe}
J.~L.~Vehel A.~Ayache, S.~Cohen.
\newblock {\em The Covariance Structure of Multifractional Brownian Motion,
  With Application to Long Range Dependence}.
\newblock HAL: id. inria-00581032, mar-2011.

\bibitem{SamRafMesKok}
Stefan~Samko \and Humberto Rafeiro \and Alexander Meskhi~\and
  Vakhtang~Kokilashvili.
\newblock {\em Integral operators in non-standard function spaces vol 2:
  Variable exponent H\"{o}lder, Morrey-Campanato and Grand Spaces}.
\newblock 2016.

\bibitem{DieHarHasRuz}
Lars~Diening \and Petteri Harjulehto \and Peter Hasto~\and Michael~Ruzicka.
\newblock {\em Lebesgue and Sobolev Spaces with Variable Exponent}.
\newblock 2017.

\bibitem{BenJafRed}
Roux Benassi, Jaffard.
\newblock {\em Elliptic Gaussian random processes}.
\newblock 1997.

\bibitem{SamRos}
Stefan~Samko Bertram~Ross.
\newblock {\em Fractional Integration Operator of Variable order in the H\"{o}lder
  spaces H}.
\newblock 1995-777-788.

\bibitem{BouDozMar}
Renaud~Marty Brahim~Boufoussi, Marco~Dozzi.
\newblock {\em Local time and Tanaka formula for a Volterra-type
  multifractional Gaussian process}.
\newblock https://projecteuclid.org/euclid.bj/1290092907, 2010.

\bibitem{SergCo}
Serge Cohen.
\newblock {\em From Self-Similarity to Local Self-Similarity: The Estimation
  Problem}.
\newblock Fractals: Theory and Applications in engineering, Springer, 1999.

\bibitem{NuaOki}
Y.~Oukine D.~Nualart.
\newblock {\em Regularization of differential equation by fractional noise}.
\newblock 2002.

\bibitem{LebVehHer}
Erick~Herbin Joachim~Lebovits, Jaques Levy~Vehel.
\newblock {\em Stochastic integration with respect to multi fractional Brownian
  motion via tangent fractional brownian motions}.
\newblock Stochastic processes and their applications, sept-2013.

\bibitem{LebovitsLevy}
Jacques Levy~Vehel Joachim~Lebovits.
\newblock {\em White noise-based stochastic calculus with respect to
  multifractional Brownian motion}.
\newblock Stochastics An International Journal of Probability and Stochastic
  Processes, mar-2011.

\bibitem{SCLim}
S.~C. Lim.
\newblock {\em Fractional Brownian motion and multifractional Brownian motion
  of Rieman-Liouville type}.
\newblock 34 (2001).

\bibitem{Peltier}
Jacques Levy~Vehel Romain-Francois~Peltier.
\newblock {\em Multifractional Brownian Motion: definition and preliminary
  results}.
\newblock HAL Id: inria-0074045, 1995.

\bibitem{Bianchi}
A.~Pianese S.~Bianchi, A.~Pantanella.
\newblock {\em Modelling stock prices by multi fractional Brownian motion: an
  improved estimation of the pointless regularity}.
\newblock Quantitative finance,13:8,1317-1330, DOI:
  10.1080/14697688.2011.594080, aug-2011.

\bibitem{Samko1}
S.~Samko.
\newblock {\em Fractional Integration of Variable Order}.
\newblock 1995.

\bibitem{Samko2}
Stefan Samko.
\newblock {\em Fractional Integration of Variable Order: an overview}.
\newblock 2013.

\bibitem{Corlay}
Jacques Levy~Vehel Sylvian~Corlay, Joachim~Lebovits.
\newblock {\em Multifractional Stochastic Volatility Models}.
\newblock Mathematical Finance Vol. 24, No. 2, apr-2014.

\bibitem{Lorenzo2002}
Carl~Lorenzo Tom~Hartley.
\newblock {\em Variable Order and Distributed Order Fractional Operators}.
\newblock NASA TM 2002 211376.

\end{thebibliography}

\end{document}